\newcommand{\Bcal}{\mathcal{B}}
\newcommand{\Ucal}{\mathcal{U}}
\newcommand{\ch}{\mathbf{1}}
\newcommand{\Z}{\mathbb{Z}}
\newcommand{\R}{\mathbb{R}}
\newcommand{\N}{\mathbb{N}}
\newcommand{\F}{\mathbb{F}}
\newcommand{\U}{\mathbb{U}}
\newcommand{\Lb}{\mathbf{L}}
\newcommand{\Xb}{\mathbf{X}}
\newcommand{\al}{\alpha}
\newcommand{\Ga}{\Gamma}
\newcommand{\ga}{\gamma}
\newcommand{\del}{\delta}
\newcommand{\ep}{\epsilon}
\newcommand{\sig}{\sigma}
\newcommand{\la}{\lambda}
\newcommand{\La}{\Lambda}
\newcommand{\tet}{\theta}
\newcommand{\Tet}{\Theta}
\newcommand{\om}{\omega}
\newcommand{\Om}{\Omega}
\newcommand{\br}{\vspace{3 mm}}
\newcommand{\imp}{\Rightarrow}
\newcommand{\nor}{\vartriangleleft}
\newcommand{\tri}{\bigtriangleup}
\newcommand{\inte}{{\rm{int\,}}}
\newcommand{\cls}{{\rm{cls\,}}}
\newcommand{\supp}{{\rm{supp\,}}}
\theoremstyle{plain}
\newtheorem{thm}{Theorem}[section]
\newtheorem{cor}[thm]{Corollary}
\newtheorem{prop}[thm]{Proposition}
\theoremstyle{definition}
\newtheorem{defn}[thm]{Definition}
\newtheorem{rmk}[thm]{Remark}
\newtheorem{exa}[thm]{Example}
\newtheorem{prob}[thm]{Problem}
\begin{document}

\title[Uniformly recurrent subgroups]
{Uniformly recurrent subgroups}

\date{February 20, 2014}

\author{Eli Glasner and Benjamin Weiss}

\address{Department of Mathematics\\
     Tel Aviv University\\
         Tel Aviv\\
         Israel}
\email{glasner@math.tau.ac.il}

\address {Institute of Mathematics\\
 Hebrew University of Jerusalem\\
Jerusalem\\
 Israel}
\email{weiss@math.huji.ac.il}

\begin{abstract}
We define the notion of uniformly recurrent subgroup, URS in short,
which is a topological analog of the notion of invariant random subgroup (IRS),
introduced in \cite{AGV}. Our main results are as follows. 
(i) It was shown in \cite{W} that for an arbitrary countable infinite group $G$,
any free ergodic probability measure preserving $G$-system admits a minimal model.
In contrast we show here, using URS's, that for the lamplighter group there is an ergodic
measure preserving action which does not admit a minimal model.
(ii) For an arbitrary countable group $G$, every URS can be realized as 
the stability system of some topologically transitive $G$-system.
\end{abstract}

\subjclass[2000]{37B05, 37A15, 20E05, 20E15, 57S20}

\keywords{Invariant minimal subgroups, URS, IRS, stability group, stability system,
essentially free action, free group}


\maketitle

\tableofcontents

\section*{Introduction}

Let $G$ be a locally compact second countable topological group.
A {\em $G$-dynamical system} is a pair $(X,G)$ where $X$ is a compact metric
space and $G$ acts on $X$ by homeomorphisms. 
Given a compact dynamical system $(X,G)$, for 
$x \in X$ let $G_x =\{g \in G : gx =x\}$ be the {\em stability group at $x$}. 

Let $\mathcal{S}=\mathcal{S}(G)$ be the compact metrizable space of all subgroups of $G$ 
equipped with the Fell (or Chabauty) topology.
Recall that given a Hausdorff topological space $X$, a basis for the {\it Fell topology} on the hyperspace $2^X$, comprising the closed subsets of $X$, is given by the collection of sets $\{\Ucal(U_1,\dots,U_n; C)\}$, where 
$$
\Ucal(U_1,\dots,U_n; C) =\{A \in 2^X :  \forall \ 1 \le j \le n,\  A \cap U_j 
\not=\emptyset \ \& \ A \cap C = \emptyset\}.
$$
Here $\{U_1,\dots, \U_n\}$ ranges over finite collections of open subsets of 
$X$ and $C$ runs over the compact subsets of $X$.
The Fell topology is always compact and it is Hausdorff iff $X$ is locally compact (see e.g. \cite{Be}).
We let $G$ act on $\mathcal{S}(G)$ by conjugation.
This action makes $(\mathcal{S}(G),G)$ a $G$-dynamical system.
In order to avoid confusion we denote this action by $(g,H) \mapsto g\cdot H$
($g \in G, H \in \mathcal{S}(G)$).
Thus for a subgroup $H < G$ we have $g\cdot H = H^g = gHg^{-1}$.

Perhaps the first systematic study of the space $\mathcal{S}(G)$ is to be found in 
Auslander and Moore's memoir \cite{AM}. It then played a central role in the seminal work of
Stuck and Zimmer \cite{SZ}. 
More recently the notion of IRS (invariant random subgroup)
was introduced in the work of M. Abert, Y. Glasner and B. Virag \cite{AGV}.
Formally this object is just a $G$-invariant probability measure on $\mathcal{S}(G)$.
This latter work served as a catalyst and lead to a renewed vigorous interest in the study of IRS's
(see, among others, \cite{A-S}, \cite{AGV2},  \cite{B}, \cite{B2}, \cite{BGK}, and \cite{V}). 
A brief historical discussion of the subject can be found in \cite{AGV}.

Pursuing the well studied analogies between ergodic theory and topological dynamics
(see \cite{GW}) we propose to introduce a topological dynamical analogue of the notion of an IRS.

\begin{defn}\label{URS-def}
A minimal subsystem of $\mathcal{S}(G)$ is called a {\em uniformly recurrent subgroup},
URS in short. (Recall that according to Furstenberg a point $x$ in a compact dynamical system
$(X,G)$ is {\em uniformly recurrent} (i.e. for every neighborhood $U$ of $x$ the set $N(x,U)
=\{g \in G : gx \in U\}$ is syndetic) if and only if the orbit closure ${\cls}\{gx : g \in G\}$
is a minimal set.)
A topologically transitive subsystem of $\mathcal{S}(G)$ is called a {\em topologically transitive subgroup}, TTS in short.
\end{defn}

For later use we also define a notion of nonsingular random subgroup.

\begin{defn}
Recall that a {\em nonsingular action of $G$} is a measurable action of $G$ on a standard Lebesgue probability space $(X,\Bcal,\mu)$, where the action preserves the measure class of $\mu$ (i.e. $\mu(A) = 0 \iff \mu(gA) =0$ for every $A \in \Bcal$ and every $g \in G$).
We will call a nonsingular measure on $\mathcal{S}(G)$ a
{\em nonsingular random subgroup}, NSRS in short.
\end{defn}

In Section \ref{Sec1} we define and study the stability system which is associated to
a dynamical system and then consider some examples of groups possessing only trivial URS's. 
In Section \ref{Sec2} we examine homogeneity properties of URS's. In Section \ref{Sec3}
we show how a great variety of URS's can arise for lamplighter groups. In Section \ref{Sec4} 
we obtain some applications of URS's to ergodic theory. In Section \ref{Sec5} the richness of
the space $\mathcal{S}(\F_2)$ is demonstrated. Finally, in Section \ref{Sec6} we consider
the question of realization of URS's as stability systems.

We thank N. Avni, P.-E. Caprace, Y. Glasner and S. Mozes for their helpful advice.

\section{Stability systems}\label{Sec1}

It is easy to check that, whenever $(X,G)$ is a dynamical system, 
the map $\phi : X \to \mathcal{S}(G),\ x \mapsto G_x$ is upper-semi-continuous; i.e. $x_i \to x$ implies $\limsup \phi(x_i) \subset \phi(x)$.
In fact, if $x_i \to x$ and $g_i \to g$ (when $G$ is discrete the latter just means that 
eventually $g_i=g$) are convergent sequences in $X$ and
$G$ respectively, with $g_i \in G_{x_i}$, then $x_i=g_i x_i \to gx$, hence
$gx=x$ so that $\limsup G_{x_i} \subset G_x$. 
Recall that whenever $\psi : X \to 2^Z$ is an upper-semi-continuous map,
where $X$ is compact metric and $Z$ is locally compact and second countable, there exists a dense 
$G_\del$ subset $X_0 \subset X$ where $\psi$ is continuous at each point
$x_0 \in X_0$ (see e.g. \cite[page 95, Theorem 1]{C}).

\begin{defn}
Let $\pi : (X,G) \to (Y,G)$ be a homomorphism of $G$-systems; i.e.
$\pi$ is a continuous, surjective map and $\pi(gx) = g \pi(x)$ for every $x \in X$ and $g \in G$.
We say that $\pi$ is an {\em almost one-to-one extension} if there is a dense $G_\del$
subset $X_0 \subset X$ such that $\pi^{-1}(\pi(x)) = \{x\}$ for every $x \in X_0$.
\end{defn}

\begin{prop}\label{stability}
Let $(X,G)$ be a compact system. 
Denote by $\phi : X \to \mathcal{S}(G)$ the upper-semi-continuous map $x \mapsto G_x$
and let $X_0 \subset X$ denote the dense $G_\del$ subset of continuity points of $\phi$. 
Construct the diagram 
\begin{equation*}
\xymatrix
{
& \tilde{X} = X \vee Z \ar[dl]_\eta\ar[dr]^\al  & \\
X & & Z 
}
\end{equation*}
where 
\begin{gather*}
Z = {\cls} \{G_{x} : x \in X_0\}\subset \mathcal{S}(G),\\
\tilde{X} = {\cls}\{(x,G_{x}) : x \in X_0\} \subset X \times Z,
\end{gather*}
and $\eta$ and $\al$ are the restrictions to $\tilde{X}$ of the projection maps.
We have:
\begin{enumerate}
\item
The map $\eta$ is an almost one-to-one extension.
\end{enumerate}
If moreover $(X,G)$ is minimal then
\begin{enumerate}
\item[(2)] $Z$ and $\tilde{X}$ are minimal systems.
\item[(3)]
$Z$ is the unique minimal subset of the set ${\cls} \{G_x : x \in X\}\subset \mathcal{S}(G)$
and $\tilde{X}$ is the unique minimal subset of the set 
${\cls} \{(x,G_x) : x \in X\}\subset X \times \mathcal{S}(G)$.
\end{enumerate}
\end{prop}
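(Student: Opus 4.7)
The plan is to use the upper-semi-continuity of $\phi$ to force the fibers of $\eta$ over points of $X_0$ to be singletons, and then to exploit minimality of $(X,G)$ to propagate this rigidity. For (1), every point of $\tilde{X}$ is a Fell limit $(x_i,G_{x_i}) \to (x,H)$ with $x_i \in X_0$; when moreover $x \in X_0$, continuity of $\phi$ at $x$ gives $G_{x_i} \to G_x$, and so $H = G_x$. Thus $\eta^{-1}(x) = \{(x,G_x)\}$ for every $x \in X_0$. Since $X_0$ is a dense $G_\delta$ in $X$ and $\eta$ is continuous and surjective, $\eta^{-1}(X_0)$ is a $G_\delta$ in $\tilde{X}$ containing the dense set $\{(x,G_x) : x \in X_0\}$; hence it is a dense $G_\delta$ on which $\eta$ is injective, establishing that $\eta$ is almost one-to-one.

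For (2), assume $(X,G)$ is minimal and let $M \subseteq \tilde{X}$ be a nonempty closed $G$-invariant subset. Then $\eta(M)$ is a nonempty closed $G$-invariant subset of $X$, so $\eta(M) = X$. For each $x \in X_0$ there is $\tilde{y} \in M$ with $\eta(\tilde{y}) = x$, and by (1) necessarily $\tilde{y} = (x,G_x)$. Taking closures yields $M \supseteq \tilde{X}$, so $M = \tilde{X}$ and $\tilde{X}$ is minimal. Then $Z = \alpha(\tilde{X})$ is minimal as a continuous equivariant image of a minimal system.

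For (3), introduce the compact $G$-invariant set $W = \cls\{(x,G_x) : x \in X\} \subseteq X \times \Scal(G)$, which contains $\tilde{X}$. The same Fell-continuity argument as in (1) shows $W \cap (\{x\} \times \Scal(G)) = \{(x,G_x)\}$ for every $x \in X_0$. If $N$ is any minimal subset of $W$, its projection to $X$ is a nonempty closed $G$-invariant subset of $X$, hence equals $X$; so $(x,G_x) \in N$ for every $x \in X_0$, and by closure $N \supseteq \tilde{X}$, whence $N = \tilde{X}$. For uniqueness of $Z$: any minimal $M \subseteq \cls\{G_x : x \in X\}$ has nonempty preimage in $W$ under the second projection, that preimage contains a minimal subset which must be $\tilde{X}$, and projecting back gives $Z = \alpha(\tilde{X}) \subseteq M$, so $M = Z$ by minimality of $M$.

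The only point demanding care is the Fell-topology behavior at continuity points of $\phi$; once the singleton-fiber assertion over $X_0$ is in hand, everything else is a direct application of minimality, and I do not anticipate a serious obstacle.
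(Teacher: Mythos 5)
Your proposal is correct and follows essentially the same route as the paper: upper-semi-continuity forces singleton fibers of $\eta$ over the continuity points $X_0$, and minimality of $(X,G)$ then forces any closed invariant (resp.\ minimal) subset to contain the dense set $\{(x,G_x): x\in X_0\}$. Your part (2) is phrased via arbitrary closed invariant sets rather than the paper's orbit-closure argument, and your part (3) spells out the uniqueness of $Z$ by pulling back through the second projection of $W$, but these are only expository variants of the paper's (terser) proof.
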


\begin{proof}
(1)\ 
Let $\tilde{X}_0 =\{(x,G_x) : x \in X_0\}$. It is easy to see that the fact that $x \in X_0$
implies that the fiber $\eta^{-1}(\eta((x,G_x)))$ is the singleton $\{(x,G_x)\}$. 

(2)\ 
Fix a point $x_0 \in X_0$. The minimality of $(X,G)$ implies that the orbit of the point
$(x_0,G_{x_0})$ is dense in $\tilde{X}$. On the other hand,
if $(x,L)$ is an arbitrary point in $\tilde{X}$ then, again by minimality of $(X,G)$, there 
is a sequence $g_n \in G$ with $\lim_{n}g_nx = x_0$. We can assume that
the limit $\lim_n g_n (x,L) = (x_0,K)$ exists as well, and then the fact that $x_0 \in X_0$
implies that $K = G_{x_0}$.
This shows that $(\tilde{X},G)$ is minimal and then so is $Z = \al(\tilde{X})$.

(3)\ 
Given any $x\in X$ we argue, as in part (2), that $(x_0,G_{x_0})$ is in the orbit closure
of $(x,G_x)$.
\end{proof}

\begin{defn}\label{stab-sys-def}
Given a dynamical system $(X,G)$ we call the system $Z \subset \mathcal{S}(G)$
{\em the stability system of $(X,G)$}. We denote it by $Z = \mathcal{S}_X$. 
We say that $(X,G)$ is {\em essentially free} if $Z = \{e\}$. 
Note that when $Z = \{N\}$ is a singleton then the subgroup $N < G$ is necessarily a normal
subgroup of $G$ and, by the upper-semi-continuity of the map $x \mapsto G_x$, it follows that
$N < G_x$ for every $x \in X$, whence $N = \bigcap \{G_x : x \in X\}$. In this case then, the action reduces to an action of the group $G/N$ and the latter is essentially free. 
In particular, if the action of $G$ on $X$ is {\em effective}
(i.e. $\forall g \not = e, \exists x \in X, gx \not=x$)
then $Z =\{N\}$ implies that $N = \{e\}$, so that the action is essentially free.
Also note that part (3) of the proposition implies that if $(X,G)$ is minimal and there is some point 
$x \in X$ with $G_x =\{e\}$ then $(X,G)$ is necessarily essentially free.
\end{defn}

\begin{prop}\label{Z}
Let $X \subset \mathcal{S}(G)$ be a URS and let $X_0 \subset X$ denote the dense $G_\del$ subset 
of $X$ consisting of the continuity points of the map $x \mapsto G_x = N_G(x)$. 
Consider 
$$
Z = \mathcal{S}_X = \cls \{G_x : x \in X_0\} = \cls \{N_G(x) : x \in X_0\},
$$
the stability system of $X$.
\begin{enumerate}
\item 
$Z$ is again a URS.
\item
If $N_G(x_0) = x_0$ for some $x_0 \in X_0$ then $Z=X$.
\item
Conversely, if $Z = X$ then $N_G(x) =x$ for every $x \in X_0$.
\item
If for some $x_0 \in X_0$ we have that $N = N_G(x_0)$ is a co-compact subgroup of $G$, 
then $Z =\mathcal{S}_X$ is a factor of the homogeneous $G$-space $G/N$.
\item
If, in addition, $N_G(N) = N$, then $Z \cong G/N$.
\end{enumerate}
\end{prop}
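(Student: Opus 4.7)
The plan is to verify the five items in order, with the substantive content concentrated in (3).

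For (1), apply Proposition \ref{stability}(2) to the minimal system $(X,G)$ (it is minimal because $X$ is a URS); this immediately gives that $Z\subseteq\mathcal{S}(G)$ is minimal, hence a URS. For (2), if $x_0\in X_0$ satisfies $N_G(x_0)=x_0$, then $x_0=\phi(x_0)\in\phi(X_0)\subseteq Z$, and the two minimal subsystems $X$ and $Z$ of $\mathcal{S}(G)$ both coincide with $\overline{G\cdot x_0}$, so $Z=X$.

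For (3) the central step is to show that the set $A:=\{x\in X_0:N_G(x)=x\}$ is nonempty; once this is established, $A=X_0$ follows. Indeed, since $\phi(X_0)\subseteq Z=X$, the map $\phi|_{X_0}:X_0\to X$ is continuous and $G$-equivariant, so $A$ is relatively closed in $X_0$ and $G$-invariant. If $A\neq\emptyset$, then the closure of $A$ in $X$ is a nonempty closed $G$-invariant subset of the minimal $X$, hence all of $X$; for any $x\in X_0$ one picks $a_n\in A$ with $a_n\to x$ and uses continuity of $\phi$ at $x$ to conclude $\phi(x)=\lim\phi(a_n)=\lim a_n=x$.

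The main obstacle is producing even one point of $A$. I would reformulate this as showing $\tilde X\cap\Delta_X\neq\emptyset$ for the diagonal $\Delta_X\subseteq X\times X$: since both $\tilde X$ and $\Delta_X$ are minimal $G$-subsystems of $X\times X$, any common point forces $\tilde X\subseteq\Delta_X$, i.e., $\phi(x)=x$ for every $x\in X_0$. Two structural facts are in hand. First, inclusion is closed in the Fell topology, so $\tilde X\subseteq\{(a,b):a\subseteq b\}$ because $x\subseteq N_G(x)$ always. Second, $Z=X$ makes $\alpha:\tilde X\to X$ surjective while $\eta:\tilde X\to X$ is almost one-to-one. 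Starting from $x_0\in X_0$ with $y_0:=\phi(x_0)\in X$, minimality supplies $g_n\in G$ with $g_n\cdot x_0\to y_0$; extracting so that $g_n\cdot y_0\to y^*\in X$, the inclusion constraint yields $y_0\subseteq y^*$, and when $y_0\in X_0$ the almost one-to-one property of $\eta$ pins down the fiber and gives $y^*=\phi(y_0)=N_G(y_0)$. Iterating produces an ascending chain $x_0\subseteq y_0\subseteq y^*\subseteq\cdots$ in $X$; if the chain stabilizes, the stable point lies in $A$, and if not, Fell compactness identifies the chain with a Fell-limit subgroup $y_\infty=\bigcup_n y^{(n)}\in X$ which, if it lies in $X_0$, satisfies $\phi(y_\infty)=y_\infty$ by continuity and hence belongs to $A$. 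The hardest step--and the crux I expect to grind against--is ensuring that this iteration stays inside the comeager set $X_0$; this calls for a Baire-category or genericity refinement, leveraging the $G$-invariance of $X_0$ in the minimal system $X$ (in particular, that for open $U\subseteq X$ meeting $X_0$, the $G$-equivariance of $\phi$ together with $Z=X$ forces $\phi(U\cap X_0)\cap X_0\neq\emptyset$).

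For (4), set $N:=N_G(x_0)=\phi(x_0)\in Z$. By orbit-stabilizer, the conjugation orbit $G\cdot N\subseteq\mathcal{S}(G)$ is $G$-equivariantly isomorphic to $G/N_G(N)$; since $N\leq N_G(N)$ and $G/N$ is compact by hypothesis, the quotient $G/N_G(N)$ is also compact, so $G\cdot N$ is a compact and therefore closed subset of the Hausdorff space $\mathcal{S}(G)$. Being $G$-transitive, $G\cdot N$ is minimal, and containing $N\in Z$ it must equal the minimal $Z$. Thus $Z\cong G/N_G(N)$ is a factor of $G/N$, as claimed. Part (5) is immediate: if additionally $N_G(N)=N$, the identification reads $Z\cong G/N$.
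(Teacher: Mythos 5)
Your treatments of (1), (2), (4) and (5) are correct and essentially identical to the paper's: (1) is Proposition \ref{stability}(2), (2) is the ``two minimal sets sharing a point coincide'' argument, and (4)--(5) use compactness of the conjugation orbit $G\cdot N$ together with the factor map $gN\mapsto gNg^{-1}$. Your reduction of (3) to producing a single point of $A=\{x\in X_0 : N_G(x)=x\}$ is also correct.

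But the production of that one point is a genuine gap, and you flag it yourself. Your iteration scheme does not close: after one step the point $y_0=N_G(x_0)$ lies in $X$ but there is no reason for it to lie in $X_0$, and for $y\in X\setminus X_0$ the subgroup $N_G(y)$ need not even belong to $Z=X$ (Proposition \ref{stability}(3) only says $Z$ is the unique minimal subset of $\cls\{G_x: x\in X\}$, not that every $G_x$ lies in $Z$), so the chain is not even well defined past the first step; moreover nothing forces the chain to stabilize, and the Fell limit of an increasing chain has no reason to land in the comeager set $X_0$. No Baire-category refinement of the iteration is offered, and I don't see one. The paper's missing idea is different and cleaner: consider the set $X_{max}=\{x\in X : y\supset x \ \&\ y\in X \Rightarrow y=x\}$ of inclusion-maximal elements of $X$; as in \cite[Lemma 5.3]{G-90} this is a dense $G_\delta$ subset of $X$, so $X_0\cap X_{max}\neq\emptyset$. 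For $x_0$ in this intersection one has $x_0\subset N_G(x_0)\in Z=X$, and maximality forces $x_0=N_G(x_0)$ at once, giving the point of $\tilde X\cap\Delta_X$ you need; your diagonal argument then finishes. Without the residuality of $X_{max}$ (or some substitute), your proof of (3) is incomplete.
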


\begin{proof}
1. \ The first part is a direct consequence of Proposition \ref{stability}, and we also deduce that
$Z$ is a factor of an almost 1-1 extension of $X$, namely of $\tilde{X}$.

2. \ The second follows since by our assumption $x \in X \cap Z$.

3. \ Now suppose $Z=X$ and let $x_0$ be a point in $X_0$. We have $N =G_{x_0}=N_G(x_0) \in Z$ 
and thus $N \in X$. 
Let now $X_{max} = \{x \in X : y \supset x \ \& \ y \in X \imp y =x\}$. As in \cite[Lemma 5.3]{G-90}
one shows that the set $X_{max}$ is a dense $G_\del$ subset of $X$. We now further assume that 
$x_0 \in X_0 \cap X_{max}$. 
But then the inclusion $x_0\subset N$ implies $x_0=N$ and we have
$$
(x_0, N) = (x_0, G_{x_0}) = (x_0,x_0) \in \tilde{X}.
$$
This implies $\tilde{X} = \{(x,x) : x \in X\}$ and, in particular 
$$
(x,G_x) = (x,N_G(x)) = (x,x),
$$
for every $x \in X_0$.

4.\ For the fourth part note that, by assumption, $KN =G$ for some compact subset 
$K \subset G$. It follows that $G \cdot N = K \cdot N = \{k N k^{-1} : k \in K\}$, the $G$-orbit of $N$, is compact and therefore $Z =G \cdot N$. 
Moreover, the map $\psi : G/N \to Z$ defined by $\psi(gN) = gNg^{-1}$ is a homomorphism.
Finally $\psi$ is one-to-one when $N_G(N) = N$, whence follows part (5). 
\end{proof}

The proof of the next proposition is straightforward.

\begin{prop}\label{gen}
\begin{enumerate}
\item
A surjective group homomorphism $\eta : A \to B$ between two countable groups 
$A$ and $B$ induces an embedding $\eta_* : \mathcal{S}(B)  \to \mathcal{S}(A)$ of the corresponding dynamical systems. Explicitly, for $H < B$ its image in $\mathcal{S}(A)$
is the subgroup $\tilde{H} = \eta^{-1}(H)$. Moreover we have
$$
\tilde{H}^a = \eta_*(H)^a = \eta_*(H^{\eta(a)})=\widetilde{H^{\eta(a)}}
$$
for every $a \in A$.
\item
Let $G$ be a group and $H < G$ a subgroup of finite index in $G$. If $Z_0 \subset \mathcal{S}(H)$ 
is a $H$-URS, then $Z = \cup \{gZ_0g^{-1} : g \in G\}$ is a $G$-URS in $\mathcal{S}(G)$. 
\end{enumerate}
\end{prop}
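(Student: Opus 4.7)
For part (1), I would verify that $\eta_* : \Scal(B) \to \Scal(A)$ is well-defined, continuous, injective, and equivariant in the stated sense. Well-definedness is immediate since the preimage of a subgroup under a homomorphism is a subgroup. For continuity I would use the fact that, for discrete (countable) groups, Fell convergence $H_n \to H$ in $\Scal(B)$ is equivalent to pointwise convergence of characteristic functions $\ch_{H_n}(b) \to \ch_H(b)$ for every $b \in B$. Applying this criterion at $b = \eta(a)$ for each $a \in A$ gives the corresponding convergence $\eta^{-1}(H_n) \to \eta^{-1}(H)$ in $\Scal(A)$. Injectivity is clear from $\eta(\eta^{-1}(H)) = H$, which uses surjectivity of $\eta$. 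The equivariance identity is a direct calculation: $\tilde{h} \in \tilde{H}^a$ iff $a^{-1}\tilde{h}a \in \eta^{-1}(H)$ iff $\eta(a)^{-1}\eta(\tilde{h})\eta(a) \in H$ iff $\eta(\tilde{h}) \in H^{\eta(a)}$, which is by definition $\widetilde{H^{\eta(a)}}$.

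For part (2), fix coset representatives $g_1,\dots,g_n$ of $H\backslash G$. Because $Z_0$ is $H$-invariant and every $g \in G$ can be written $g = g_i h$ with $h \in H$, the union $Z = \cup\{gZ_0g^{-1} : g \in G\}$ collapses to the finite union $Z = \cup_{i=1}^n g_i Z_0 g_i^{-1}$. Elements of $Z_0 \subset \Scal(H)$, being subgroups of $H$, are automatically subgroups of $G$, and the natural inclusion $\Scal(H) \hookrightarrow \Scal(G)$ is continuous: any $g \in G \setminus H$ lies outside every subgroup of $H$, so the two Fell topologies agree on $\Scal(H)$. Consequently each $g_i Z_0 g_i^{-1}$ is compact in $\Scal(G)$, and $Z$ is a closed $G$-invariant subset.

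The core step is minimality. Let $W \subset Z$ be a nonempty closed $G$-invariant subset. Picking any $K \in W$, one has $K \in g_i Z_0 g_i^{-1}$ for some $i$, so $g_i^{-1}Kg_i \in Z_0$, and by $G$-invariance of $W$ also $g_i^{-1}Kg_i \in W$; thus $W \cap Z_0 \neq \emptyset$. Since $W$ is $H$-invariant (as $H < G$) and $Z_0$ is $H$-invariant, $W \cap Z_0$ is a nonempty closed $H$-invariant subset of the $H$-minimal set $Z_0$, forcing $Z_0 \subset W$. Then $G$-invariance of $W$ yields $g_i Z_0 g_i^{-1} \subset W$ for each $i$, so $Z \subset W$, whence $W = Z$. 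The only point requiring care is the continuity of the embedding $\Scal(H) \hookrightarrow \Scal(G)$; once this is verified the remainder is formal, in agreement with the author's assessment that the proof is straightforward.
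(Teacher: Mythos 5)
Your argument is correct and supplies exactly the details the authors omit (the paper only remarks that the proof is ``straightforward''): pointwise convergence of indicator functions as the description of the Fell topology on $\mathcal{S}$ of a discrete group, the resulting continuity of $H \mapsto \eta^{-1}(H)$ and of the inclusion $\mathcal{S}(H) \hookrightarrow \mathcal{S}(G)$, the collapse of the union to finitely many conjugates via $H$-invariance of $Z_0$, and the minimality argument through $W \cap Z_0$. The only blemish is notational: the decomposition $g = g_i h$ with $h \in H$ uses representatives of the left cosets $g_iH$ (i.e.\ of $G/H$), not of $H\backslash G$ as written, but this does not affect the proof.
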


For further information on the space $\mathcal{S}(G)$ see \cite{Sch}.

\br

Clearly for an abelian group $G$ the conjugation action on $\mathcal{S}(G)$
is the identity action. If $G$ is a finitely generated nilpotent group then every
subgroup of $G$ is finitely generated as well \cite{Ba}. Thus $G$ has only countably many
subgroups and we conclude that every IRS (hence also every URS) of $G$ is finite 
\footnote{We thank Yair Glasner for this observation and for suggesting example \ref{mon} below.}.

Let $G$ be a connected semisimple Lie group with finite center and $\R$-rank $\ge 2$,
satisfying  property (T) (e.g. $G = SL(3,\R)$).
It follows from the Stuck-Zimmer theorem \cite{SZ} that any URS in $\mathcal{S}(G)$,
which supports a $G$-invariant probability measure, is necessarily of the form 
$\{g\Ga g^{-1} : g \in G\} \cong G/N_G(\Gamma)
\cong \mathcal{S}_{G/\Ga}$,
where $\Gamma < G$ is a co-compact lattice and $N_G(\Gamma)
=\{g \in G : g\Ga g^{-1} =\Ga\}$ its normalizer in $G$. Moreover, for each
parabolic subgroup $Q < G$, the homogeneous space $G/Q$, with left multiplication, forms
a minimal $G$ action and clearly the corresponding stability system $\mathcal{S}_{G/Q} =
\{gQg^{-1} : g \in G\} \cong G/Q$ is a URS. We don't know whether, up to isomorphism, these  are the onlyURS's in $\mathcal{S}(G)$.

\br

In this connection we have the following theorems of Stuck \cite[Theorem 3.1 and Corollary 3.2]{S}.
(The action of a topological group $G$ on a compact Hausdorff space $X$ is {\em locally free} if 
for every $x \in X$ the stability subgroup $G_x$ is discrete.)

\begin{thm}[Stuck]\label{Stuck1}
Let $G$ be a real algebraic group acting minimally on a compact Hausdorff space $X$. 
Fix $x \in X$ and let $N =N_G((G_x)^0)$.  Then: 
\begin{enumerate}
\item
$N$ is co-compact in $G$. 
\item 
If $H$ is any closed subgroup of $G$ containing $N$ then there is a compact minimal $H$-space $Y$ such that $X$ is $G$-equivariantly homeomorphic to $G \times_H Y$. 
\end{enumerate}
\end{thm}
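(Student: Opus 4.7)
The plan is to show that the map $x\mapsto\mathrm{Lie}((G_x)^0)$ is a continuous $G$-equivariant map from $X$ to a Grassmannian of $\mathfrak{g}$, that its image is forced to be a single closed $G$-orbit $G/N$ by the real algebraic structure of $G$, and then to use the resulting $G$-equivariant surjection $X\to G/N\to G/H$ to realise $X$ as an associated bundle $G\times_H Y$. First, define $\phi:X\to\mathrm{Gr}(\mathfrak{g})$ by $\phi(x)=\mathfrak{g}_x:=\{A\in\mathfrak{g}:\exp(tA)\in G_x\ \forall t\in\mathbb{R}\}$, the Lie algebra of $(G_x)^0$. A direct limit argument (if $A_i\in\mathfrak{g}_{x_i}$, $A_i\to A$ and $x_i\to x$, then $\exp(tA)x=\lim_i\exp(tA_i)x_i=x$ for each $t$) shows that $\phi$ is upper semi-continuous; in particular $d(x):=\dim\mathfrak{g}_x$ is upper semi-continuous. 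The level set $\{x\in X:d(x)=\max d\}$ is then closed, non-empty and $G$-invariant, so equals $X$ by minimality, and $d$ is constant on $X$. Once $d$ is constant, upper semi-continuity of subspace limits together with equality of dimensions upgrades $\phi$ to a continuous map into $\mathrm{Gr}_d(\mathfrak{g})$, $G$-equivariant for the adjoint action.

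The main obstacle, and the place where the real algebraic hypothesis is used in an essential way, is to show that $\phi(X)$ is a single closed $G$-orbit. The image $\phi(X)$ is compact and $G$-invariant, and the factor action $(\phi(X),G)$ is minimal. I would invoke the algebraicity of the adjoint action on $\mathrm{Gr}_d(\mathfrak{g})$: $G$-orbits are locally closed, and every non-empty closed $G$-invariant subset contains a $G$-orbit of minimal dimension which is automatically closed in its closure. Minimality then forces $\phi(X)$ to coincide with this closed orbit, giving $\phi(X)\cong G/N_G(\mathfrak{g}_x)=G/N$, the identification $N_G(\mathfrak{g}_x)=N_G((G_x)^0)$ following because $(G_x)^0$ is generated by $\exp(\mathfrak{g}_x)$. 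Compactness of $\phi(X)$ then delivers the co-compactness of $N$ in $G$, which is part (1).

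For part (2), let $H$ be a closed subgroup with $N\subset H\subset G$ and compose $\phi$ (viewed as $X\to G/N$) with the natural projection $G/N\to G/H$ to obtain a continuous $G$-equivariant surjection $\pi:X\to G/H$. Set $Y:=\pi^{-1}(eH)$, a compact $H$-invariant subspace. The map $G\times Y\to X$, $(g,y)\mapsto gy$, is constant on the $H$-equivalence classes $(g,y)\sim(gh,h^{-1}y)$ and so descends to a continuous $G$-equivariant map $\Phi:G\times_H Y\to X$. Surjectivity: given $x\in X$, pick $g\in G$ with $\pi(x)=gH$, so that $g^{-1}x\in Y$ and $x=\Phi([g,g^{-1}x])$. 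Injectivity: if $g_1y_1=g_2y_2$ then applying $\pi$ yields $g_2^{-1}g_1\in H$ and standard $H$-bookkeeping shows $[g_1,y_1]=[g_2,y_2]$. Compact Hausdorffness of both sides then turns $\Phi$ into a $G$-equivariant homeomorphism. Finally, if $Y'\subset Y$ is a non-empty closed $H$-invariant subset, then $\Phi(G\times_H Y')$ is closed and $G$-invariant in $X$, hence equals $X$ by minimality, which forces $Y'=Y$ and gives the minimality of $(Y,H)$.
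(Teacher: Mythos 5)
The paper does not actually prove this statement: it is quoted verbatim from Stuck \cite{S} (Theorem 3.1 and Corollary 3.2 there), so there is no in-paper proof to compare your attempt against. Judged on its own, your sketch is essentially Stuck's original argument: the equivariant map $x\mapsto \mathrm{Lie}\left((G_x)^0\right)$ into a Grassmannian of $\mathfrak{g}$, made continuous by the minimality/constant-dimension argument; the observation that a compact minimal set for a real algebraic action must be a single (hence compact) orbit because orbits of real algebraic groups are open in their closures; and the standard associated-bundle decomposition of $X$ over $G/H$ via $Y=\pi^{-1}(eH)$. I see no genuine gap. The points you leave implicit are all standard and do hold: local closedness of orbits of real algebraic groups acting algebraically on real projective varieties (this is exactly where the algebraicity hypothesis enters, as you note); the identification $N_G(\mathfrak{g}_x)=N_G\left((G_x)^0\right)$ via Cartan's theorem; the openness of the orbit map $G/N\to G\cdot\mathfrak{g}_x$ needed to read $\phi$ as a map onto $G/N$; the Hausdorffness and compactness of $G\times_H Y$ (from properness of the $H$-action on $G\times Y$ and co-compactness of $H$), which upgrades your continuous bijection to a homeomorphism; and the replacement of sequences by nets, since $X$ is only assumed compact Hausdorff rather than metrizable.
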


\begin{thm}[Stuck]\label{Stuck2}
Let $G$ be a semisimple Lie group with finite center and without compact factors, acting minimally on a compact Hausdorff space $X$. Then either the action is locally free, or it is isomorphic to an induced action 
$G\times_Q Y$, where $Q$ is a proper parabolic subgroup of $G$ and $Y$ is a compact $Q$-minimal space.
\end{thm}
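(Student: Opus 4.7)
The plan is to reduce the theorem to an auxiliary structural lemma and then invoke Theorem \ref{Stuck1} to assemble the induced action. The lemma I aim for is: \emph{if $G$ is semisimple with finite center and no compact factors, and $H$ is a nontrivial connected closed subgroup of $G$, not a product of simple factors of $G$, whose normalizer $N_G(H)$ is co-compact in $G$, then $N_G(H)$ is contained in a proper parabolic subgroup $Q < G$.} Granted this, the theorem follows quickly. Assume $(X,G)$ is not locally free and pick $x \in X$ with $H := (G_x)^0$ nontrivial. First dispose of the degenerate case where $H$ is normal in $G$, i.e.\ a product of simple factors: then, by upper semi-continuity of stabilizers and $G$-minimality, $H \subseteq G_y$ for every $y \in X$, the action factors through $G/H$, and one repeats the argument with $G/H$ in place of $G$. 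Otherwise Theorem \ref{Stuck1} supplies $N := N_G(H)$ co-compact in $G$ and $X \cong G \times_N Y'$ for some compact $N$-minimal $Y'$. Pick $Q \supseteq N$ via the lemma. Since $G/N$ is compact so is $Q/N$; hence $Q \times_N Y'$ is a compact $Q$-space and
$$
G \times_N Y' \;\cong\; G \times_Q (Q \times_N Y').
$$
Any $Q$-minimal closed subset $Y \subseteq Q \times_N Y'$ (which exists by Zorn's lemma) then produces a closed $G$-invariant subspace $G \times_Q Y \subseteq X$, which coincides with $X$ by $G$-minimality. Thus $X \cong G \times_Q Y$ with $Y$ a compact $Q$-minimal space, as required.

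The weight of the theorem sits in the auxiliary lemma, which I would attack via the adjoint action. Setting $\nf := \mathrm{Lie}(N)$ as a point in the Grassmannian of $\gf$, the co-compact embedding $N \hookrightarrow G$ supplies a $G$-quasi-invariant probability measure on $G/N$ whose push-forward under $gN \mapsto \mathrm{Ad}(g)\nf$ is an $\mathrm{Ad}(G)$-quasi-invariant probability measure on the Grassmannian, supported on the orbit of $\nf$. Since $G$ has no compact factors, a Furstenberg/Borel density style argument should force this orbit to be closed in the Grassmannian. The structure theory of semisimple Lie algebras then identifies $\nf$ (proper in $\gf$, which is the point of excluding the degenerate case) as a subalgebra of some proper parabolic subalgebra $\qf$. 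Finally $N \subseteq N_G(\nf) \subseteq N_G(\qf) = Q$, using that parabolic subgroups are self-normalizing.

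The main obstacle I anticipate is the Lie-theoretic classification step: pinning down closed $\mathrm{Ad}(G)$-orbits in the Grassmannian of subalgebras of $\gf$ as being precisely the orbits of subalgebras of parabolic subalgebras. This uses the algebraic-hull / Zariski-closure machinery of Borel--Tits, and the no-compact-factors hypothesis is essential (for compact factors the conjugation action on the Grassmannian has many closed orbits that are not parabolic). A subtler point is handling the Radon--Nikodym cocycle of the quasi-invariant measure on $G/N$ when $N$ fails to be unimodular, which must be controlled before a clean density argument can be applied. The remaining bookkeeping --- minimal-subset extraction, identification of induced spaces, and the self-normalization of parabolics --- is essentially formal.
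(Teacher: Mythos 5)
The paper offers no proof of this statement to compare against: Theorem \ref{Stuck2} is quoted from Stuck \cite{S} (it is his Corollary 3.2, deduced there from his Theorem 3.1, which is Theorem \ref{Stuck1} here). So your proposal can only be judged on its own. Its architecture --- reduce to Theorem \ref{Stuck1} plus a lemma placing the co-compact normalizer $N=N_G((G_x)^0)$ inside a proper parabolic $Q$, then re-induce via $G\times_N Y'\cong G\times_Q(Q\times_N Y')$ and extract a $Q$-minimal subset --- is indeed the route by which Stuck derives the corollary, and that bookkeeping is fine.

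But the entire weight of the corollary sits in your auxiliary lemma, and your sketch of it does not close. The closedness of the orbit of $\nf$ in the Grassmannian, on which you spend your measure-theoretic effort, is actually free: since $N\subseteq N_G(\nf)$, the orbit $G\cdot\nf\cong G/N_G(\nf)$ is a continuous image of the compact space $G/N$, so no quasi-invariant measure or density argument is needed there. The real gaps are elsewhere. (i) Knowing that $\nf$ lies in \emph{some} proper parabolic subalgebra $\qf$ does not give $N_G(\nf)\subseteq N_G(\qf)$; an element normalizing $\nf$ can permute the parabolics containing it (think of $\nf$ a Cartan subalgebra and Weyl group representatives). You need $Q$ attached \emph{canonically} to $N$. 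The standard way is Borel--Tits: $N_G(\mathfrak{h})$ is an algebraic subgroup, proper in the non-degenerate case; if it were reductive then $G/N$ would carry a finite invariant measure and Borel density would force $N$ to be Zariski dense, hence all of $G$; so its unipotent radical is nontrivial and the associated canonical parabolic contains $N_G(N)\supseteq N$. You name these tools but do not assemble them, and as written the lemma is asserted rather than proved. (ii) Your degenerate case does not terminate correctly: after factoring through $G/H$ and ``repeating,'' the quotient action may be locally free, in which case the original $G$-action is neither locally free nor induced from a proper parabolic of $G$ (take $G=G_1\times G_2$ with $G_1$ acting trivially and $G_2$ acting minimally and locally freely on $G_2/\Gamma$ for a co-compact lattice $\Gamma$). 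This shows the statement implicitly requires the action to be effective --- which would kill the degenerate case outright, since a normal $H$ fixing one point fixes all of $X$ by minimality and so lies in the kernel --- rather than being something your quotient-and-repeat manoeuvre can absorb.
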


\begin{prob}\label{pro1}\footnote{See also Problem \ref{pro2} below.}
For a semisimple Lie group $G$ as above, is it the case that every nontrivial URS of $G$ is either 
of the form $\{g\Ga g^{-1} : g \in G\}$,
where $\Gamma < G$ is a co-compact lattice, or it admits $G/Q$ as a factor
with $Q < G$ a proper parabolic subgroup of $G$?
\end{prob}

We are currently working on that problem and have some indications that an affirmative answer is plausible.
See \cite{S} for information on minimal actions of semisimple Lie groups.

\br

We next show that certain non-abelian infinite countable 
groups admit no nontrivial URS's. On the other hand, in Section \ref{Sec3} and \ref{Sec5} we 
will see many examples of nontrivial URS's.

\begin{exa}\label{mon}
A ``Tarski monster" group $G$ is a countable noncyclic group with the property that its only
proper subgroups are cyclic (either all of a fixed prime order $p$, or all infinite cyclic).
It is easy to see that such a group is necessarily simple. 
Since $G$ is countable it follows that $\mathcal{S}(G)$
is a countable set. Moreover, the only URS's in $\mathcal{S}(G)$ are $\{e\}$ and $\{G\}$.
In fact, if $X \subset \mathcal{S}(G)$ is an URS then, being a countable space, it must
have an isolated point $H \in X$. Since $X$ is minimal it must be finite (finitely many 
conjugates of the open set $\{H\}$ must cover $X$). If $H \in X$ and $H$ is neither
$\{e\}$ nor $G$, then $H$ is a cyclic group and $X = \{gHg^{-1} : g \in G\}$ is a
finite set. Now $G$ acts (by conjugation) on the finite set $X$ as a group of permutations and the
kernel of the homomorphism from $G$ onto this group of permutations is a
normal subgroup of $G$. As $G$ is simple this kernel is either $\{e\}$ or $G$ and 
both cases lead to contradictions.
\end{exa}

\begin{exa}
It is not hard to see that $\{e\}$ and $\{G\}$ are the only URS's for $G=S_\infty(\N)$,
the countable group of finitely supported permutations on $\N$.
This paucity of URS's is in sharp contrast to the abundance of IRS's of this group as
described in \cite{V}.
\end{exa}

\br

\section{Homogeneity properties of a URS}\label{Sec2}
In this section we let $G$ be a countable discrete group.

\begin{defn}
We say that a property P of groups is {\em admissible} if
\begin{enumerate}
\item
P is preserved under isomorphisms.
\item
P is inherited by subgroups, i.e. if $H$ has P and $K < H$ then $K$ has P.
\item
P is preserved under increasing unions, i.e. if $H_n \nearrow H$, where $\{H_n\}_{n \in \N}$
is an increasing sequence of subgroups of $H$ and each $H_n$ has P, then so does $H$.
\end{enumerate}
\end{defn}

\begin{prop}\label{closed}
Let P be an admissible property of groups. Then the subset
$$
\mathcal{S}(G)_P =
\{H < G : H \ {\text{has P}}\}
$$
is a closed invariant subset of the dynamical system $\mathcal{S}(G)$.
\end{prop}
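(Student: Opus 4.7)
The plan is to verify the two required properties separately. For invariance, note that for any $g \in G$ and $H < G$, the subgroup $g H g^{-1}$ is isomorphic to $H$ via conjugation, so by admissibility condition (1) it has P whenever $H$ does. Thus $\mathcal{S}(G)_P$ is closed under the conjugation action of $G$.

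For closedness, I would unpack what convergence in the Fell topology means when $G$ is countable discrete: a net $H_i \to H$ in $\mathcal{S}(G)$ is simply pointwise convergence of indicator functions $\ch_{H_i} \to \ch_H$ on $G$. In particular, for any finite subset $F \subset H$, eventually $F \subset H_i$, and hence eventually the \emph{subgroup generated} by $F$ is contained in $H_i$.

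Now suppose $H$ lies in the closure of $\mathcal{S}(G)_P$, and enumerate $H = \{h_1, h_2, \ldots\}$ (possible since $G$ is countable). Set $F_k = \{h_1, \ldots, h_k\}$ and $K_k = \langle F_k \rangle$. Then $K_1 \subset K_2 \subset \cdots$ with $\bigcup_k K_k = H$. By the remark on Fell convergence, for each $k$ there is some $H_{i_k} \in \mathcal{S}(G)_P$ with $K_k \subset H_{i_k}$. Since $H_{i_k}$ has P, admissibility (2) implies $K_k$ has P. Then by admissibility (3) applied to the increasing union $K_k \nearrow H$, the subgroup $H$ itself has P, so $H \in \mathcal{S}(G)_P$.

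There is no serious obstacle here: the proof is essentially a matter of translating the three defining clauses of admissibility directly into the three facts needed (invariance under conjugation, passage to subgroups sitting inside a given $H_i$, passage to increasing unions approximating the limit). The only point to be careful about is the characterization of the Fell topology on $\mathcal{S}(G)$ for countable discrete $G$, which makes it clear why an arbitrary limit $H$ can be built up from finitely generated subgroups each housed inside one of the approximating $H_i$'s.
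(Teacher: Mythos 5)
Your proof is correct and follows essentially the same route as the paper: exhaust the limit group $H$ by the increasing chain of finitely generated subgroups $\langle h_1,\dots,h_k\rangle$, observe that each is eventually contained in some $H_i$ with P (so has P by heredity), and conclude by closure under increasing unions. The explicit verification of $G$-invariance via admissibility (1) is a small addition the paper leaves implicit, but the substance is identical.
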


\begin{proof}
Suppose $H_n \to H$ in $\mathcal{S}(G)$ with $H_n \in \mathcal{S}(G)_P$
for every $n \in \N$. 
Let $H = \{h_0=e, h_1, h_2,\dots\}$ be an enumeration of $H$. Given $k \ge 0$ there
exits $n_0$ such that for $n \ge n_0$ we have $\{h_0=e, h_1, h_2,\dots,h_k\}\subset H_n$.
Let $H^{(k)}$ be the subgroup of $H$ (and of $H_n$) generated by the set $\{h_0=e, h_1, h_2,\dots,h_k\}$.
It is now clear that each $H^{(k)}$ has P and that $H^{(k)} \nearrow H$. It thus follows that
also $H$ has P.
\end{proof}

The assertions in the next proposition are well known and not hard to check. 
\begin{prop}
The following properties are admissible:
\begin{enumerate}
\item
Commutativity.
\item
Nilpotency of degree $\le d$.
\item
Solvability of degree $\le d$
\item
Having an exponent $q \ge 2$ (i.e. satisfying the identity $x^q =e$).
\item
Amenability.
\end{enumerate}
\end{prop}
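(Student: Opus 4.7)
The plan is to treat properties (1)--(4) uniformly, and then handle amenability (5) separately.

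For (1)--(4), the key observation is that each property can be rephrased as the universal vanishing of a single group word in finitely many variables. Commutativity is $[x_1,x_2]=e$; exponent $q$ is $x_1^q=e$; nilpotency of class $\le d$ is $[x_1,[x_2,[\ldots,[x_d,x_{d+1}]\ldots]]]=e$; and solvability of class $\le d$ is $\del_d(x_1,\ldots,x_{2^d})=e$, where $\del_d$ is built recursively from the commutator by $\del_0(x)=x$ and $\del_{j+1}(x_1,\ldots,x_{2^{j+1}})=[\del_j(x_1,\ldots,x_{2^j}),\del_j(x_{2^j+1},\ldots,x_{2^{j+1}})]$. I would first verify each of these characterizations by induction on $d$, using the standard fact that the derived subgroup $G^{(d)}$ (resp.\ the lower-central term $\ga_{d+1}(G)$) is generated by the corresponding word values, together with the commutator identities $[ab,c]=a[b,c]a^{-1}\cdot[a,c]$ and $[a,bc]=[a,b]\cdot b[a,c]b^{-1}$. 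Once the universal-identity form is in hand, admissibility is automatic: isomorphism invariance and subgroup inheritance are syntactic, and for an increasing union $H=\bigcup_n H_n$ any finite input tuple $(x_1,\ldots,x_k)$ lies in a single $H_n$ by monotonicity, so the identity transfers to $H$.

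Amenability requires more care. Isomorphism invariance is obvious. For subgroup inheritance, given a left-invariant mean $m$ on $\ell^\infty(H)$ and $K<H$, I would pick a left transversal $T$ so that $H=\bigsqcup_{t\in T}Kt$, and use the $K$-equivariant ``fold-up'' $\iota:\ell^\infty(K)\to\ell^\infty(H)$ defined by $(\iota f)(kt)=f(k)$; then $m\circ\iota$ is a left-invariant mean on $K$. For increasing unions $H=\bigcup_n H_n$, let $m_n$ be a left-invariant mean on $\ell^\infty(H_n)$ and extend it to $\tilde m_n$ on $\ell^\infty(H)$ by $\tilde m_n(f)=m_n(f\rest H_n)$, so that $\tilde m_n$ is an $H_n$-invariant mean on $\ell^\infty(H)$. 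The set of means on $\ell^\infty(H)$ is weak-$*$ compact in $\ell^\infty(H)^*$, so I would pass to a weak-$*$ accumulation point $m_\infty$. Each $g\in H$ eventually lies in $H_n$, so $\tilde m_n(g\cdot f)=\tilde m_n(f)$ for all sufficiently large $n$, and the invariance passes to the limit; hence $m_\infty$ is a left $H$-invariant mean on $\ell^\infty(H)$.

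The only mildly substantive step is the weak-$*$ compactness argument for the increasing-union clause of amenability; every other item is a syntactic consequence of a universal-identity characterization or a one-line construction, which explains the authors' remark that the verification is ``not hard to check.''
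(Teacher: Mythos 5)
Your proof is correct: the paper offers no argument for this proposition, dismissing the assertions as ``well known and not hard to check,'' and your two-pronged treatment --- universal group laws (iterated commutators, $\delta_d$, $x^q$) for items (1)--(4), and the transversal fold-up plus weak-$*$ compactness of means for amenability --- is exactly the standard verification the authors have in mind. No gaps.
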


\begin{rmk}
The question whether, for a general locally compact topological group $G$, the collection of closed 
amenable subgroups of $G$ forms a closed subset of $\mathcal{S}(G)$ is open.
As is shown in \cite{Ca-Mo} this is true for a very large class of groups (which includes 
the discrete groups). The discrete group case follows directly from Schochetman's work \cite{Sch}. 
\end{rmk}

\begin{cor}
Let $Z \subset \mathcal{S}(G)$ be a URS and let P be an admissible property. Then
either every element of $Z$ has P, or none has P. 
\end{cor}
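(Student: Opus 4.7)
The plan is to use minimality of $Z$ together with the closedness of $\mathcal{S}(G)_P$ already established in Proposition \ref{closed}. First I would observe that $\mathcal{S}(G)_P$ is not merely closed but also $G$-invariant under the conjugation action: indeed, admissibility condition (1) says that P is preserved under isomorphisms, and for any $H < G$ and $g \in G$ the subgroup $gHg^{-1}$ is isomorphic to $H$ via conjugation by $g$. Hence $H \in \mathcal{S}(G)_P$ implies $g \cdot H \in \mathcal{S}(G)_P$.

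Next I would intersect with $Z$. Since $Z \subset \mathcal{S}(G)$ is $G$-invariant, and $\mathcal{S}(G)_P$ is closed and $G$-invariant, the set $Z \cap \mathcal{S}(G)_P$ is a closed $G$-invariant subset of $Z$. Because $Z$ is a URS, i.e. a minimal subsystem, its only closed invariant subsets are $\emptyset$ and $Z$ itself. Therefore either $Z \cap \mathcal{S}(G)_P = \emptyset$, in which case no element of $Z$ has P, or $Z \cap \mathcal{S}(G)_P = Z$, in which case every element of $Z$ has P.

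There is no real obstacle here; the content is packaged entirely inside Proposition \ref{closed}, and the corollary is just the standard dichotomy for minimal systems meeting a closed invariant set. The only point one should verify carefully is $G$-invariance of $\mathcal{S}(G)_P$, which as noted is immediate from admissibility condition (1).
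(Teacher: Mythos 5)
Your argument is correct and is exactly the intended one: Proposition \ref{closed} already asserts that $\mathcal{S}(G)_P$ is closed and $G$-invariant, so intersecting with the minimal set $Z$ gives the dichotomy immediately. The paper leaves this corollary without an explicit proof precisely because it follows in this standard way, and your careful check of invariance via admissibility condition (1) is the right (and only) point to verify.
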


\begin{rmk}
One can easily check that e.g. nilpotency and being perfect (i.e. the property:
$[H,H] =H$) are not admissible properties.
\end{rmk}

We next consider topologically transitive subgroups (TTS) (i.e.
closed invariant topologically transitive subsets of $\mathcal{S}(G)$).
In the spirit of \cite{GK} let us say that a subset $\mathcal{L} \subset \mathcal{S}(G)$
is a {\em dynamical property} if it is Baire measurable, and $G$-invariant;
i.e. invariant under conjugations.
In view of Proposition \ref{closed} every admissible property of groups defines
a corresponding dynamical property in $\mathcal{S}(G)$.
The next proposition is just a special instance of the general ``zero-one law" for
topologically transitive dynamical systems, see e.g. \cite{GK}.

\begin{prop}\label{TTS}
Let $Z  \subset \mathcal{S}(G)$ be a TTS and $\mathcal{L} \subset \mathcal{S}(G)$ be
a dynamical property. Then the set $Z \cap \mathcal{L}$ is either meager or comeager.
\end{prop}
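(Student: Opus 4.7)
The plan is to mimic the standard Baire-category ``zero--one law'' for topologically transitive group actions. Set $A = Z \cap \mathcal{L}$. Since $\mathcal{L}$ is Baire measurable and $Z$ is closed in $\mathcal{S}(G)$, the set $A$ has the Baire property as a subset of $Z$; and since both $Z$ and $\mathcal{L}$ are $G$-invariant, so is $A$. The goal is then to show that $A$ is either meager or comeager in $Z$.

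Using the Baire property, write $A = U \triangle M$ with $U$ open in $Z$ and $M$ meager in $Z$. If $U = \emptyset$, then $A = M$ is meager and we are done. Otherwise form
\[
V \;=\; \bigcup_{g \in G} gU,
\]
a nonempty $G$-invariant open subset of $Z$. By topological transitivity there is a point $z_0 \in Z$ whose orbit is dense in $Z$; since $V$ is nonempty and open it contains some $gz_0$, and by $G$-invariance it then contains the whole orbit $Gz_0$. Hence $V$ is dense in $Z$, and since $Z$, being a closed subspace of the compact metrizable space $\mathcal{S}(G)$, is a Baire space, the open dense set $V$ is in fact comeager in $Z$.

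To conclude, the $G$-invariance of $A$ gives, for each $g \in G$, the inclusion $A = gA = gU \triangle gM \supset gU \setminus gM$, and therefore
\[
A \;\supset\; V \setminus \bigcup_{g \in G} gM.
\]
Because $G$ is countable and each map $H \mapsto g \cdot H$ is a homeomorphism of $\mathcal{S}(G)$ (hence sends meager sets to meager sets), $\bigcup_{g \in G} gM$ is meager in $Z$. Consequently $A$ contains a comeager subset of $Z$ and is itself comeager, which establishes the dichotomy. The argument is essentially formal; the only non-routine ingredient is the observation that in a topologically transitive system every nonempty $G$-invariant open set is dense, which follows directly from the existence of a dense orbit, so I do not anticipate any serious obstacle.
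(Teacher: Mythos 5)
Your argument is correct and is exactly the standard Baire-category zero--one law that the paper invokes by citation to \cite{GK} rather than proving: the decomposition $A = U \bigtriangleup M$, the density of the invariant open saturation $V=\bigcup_{g\in G}gU$ obtained from a dense orbit, and the meagerness of $\bigcup_{g\in G}gM$ (using the countability of $G$, which is in force throughout Section 2) are precisely the standard ingredients. The only point worth flagging is that the Baire property of $\mathcal{L}$ in $\mathcal{S}(G)$ does not formally pass to the trace $Z\cap\mathcal{L}$ in the subspace $Z$ for an arbitrary Baire-measurable set (a meager subset of $\mathcal{S}(G)$ can fail to have the Baire property relative to a closed nowhere dense $Z$); this imprecision is already present in the statement itself and is harmless for the dynamical properties actually used in the paper, which are Borel and hence have Borel, thus Baire-measurable, traces on $Z$.
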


\begin{cor}
Let $Z  \subset \mathcal{S}(G)$ be a URS then the set of perfect elements in $Z$
is either meager or comeager.
\end{cor}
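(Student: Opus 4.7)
The plan is to invoke Proposition~\ref{TTS} directly. Since every URS is by definition a minimal subsystem of $\mathcal{S}(G)$, it is in particular topologically transitive, so any URS $Z$ is a TTS. Hence it suffices to exhibit the set of perfect subgroups
\[
\mathcal{L}:=\{H\in\mathcal{S}(G):[H,H]=H\}
\]
as a dynamical property, i.e.\ as a Baire measurable, $G$-invariant subset of $\mathcal{S}(G)$; Proposition~\ref{TTS} then immediately gives that $Z\cap\mathcal{L}$, the set of perfect elements of $Z$, is either meager or comeager in $Z$.

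Conjugation invariance is immediate, since $[gHg^{-1},gHg^{-1}]=g[H,H]g^{-1}$, so $H\in\mathcal{L}$ forces $gHg^{-1}\in\mathcal{L}$ for every $g\in G$. The slightly less automatic step, and the one I would pay attention to, is Baire measurability, which I would obtain by showing that $\mathcal{L}$ is in fact Borel. Since $G$ is countable discrete, for every $g\in G$ the set $V_g:=\{H\in\mathcal{S}(G):g\in H\}$ is clopen in the Fell topology on $\mathcal{S}(G)$. For each $g\in G$ and each $n\ge 1$ set
\[
W_{g,n}:=\bigcup_{\substack{(a_1,b_1,\ldots,a_n,b_n)\in G^{2n}\\ [a_1,b_1]\cdots[a_n,b_n]=g}} V_{a_1}\cap V_{b_1}\cap\cdots\cap V_{a_n}\cap V_{b_n},
\]
a countable union of clopen sets, hence open; by construction $W_{g,n}$ is exactly the set of $H\in\mathcal{S}(G)$ in which $g$ is a product of at most $n$ commutators of elements of $H$. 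Rewriting the condition ``$g\in H\Rightarrow g\in[H,H]$'' as ``$g\notin H$, or $H\in W_{g,n}$ for some $n$'', one obtains
\[
\mathcal{L}=\bigcap_{g\in G}\Big((\mathcal{S}(G)\setminus V_g)\cup\bigcup_{n\ge 1}W_{g,n}\Big),
\]
a countable intersection of open sets, so $\mathcal{L}$ is $G_\delta$ and, a fortiori, Baire measurable.

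With $\mathcal{L}$ thus recognised as a dynamical property, Proposition~\ref{TTS} applied to $Z$ yields the desired meager/comeager dichotomy. I do not foresee any real obstacle: the only nontrivial point is the Borel description of $\mathcal{L}$ above, and the zero-one law of Proposition~\ref{TTS} does the rest.
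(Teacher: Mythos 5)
Your proof is correct and follows exactly the route the paper intends: the corollary is stated as an immediate consequence of Proposition~\ref{TTS}, with a URS being in particular a TTS, and the only point left implicit there is that the set of perfect subgroups is a dynamical property. Your verification that this set is $G_\delta$ (via the clopen sets $V_g$ and the open sets $W_{g,n}$) supplies precisely that missing detail --- which is genuinely needed, since the paper explicitly notes that perfectness is \emph{not} an admissible property, so Proposition~\ref{closed} cannot be used to see that this set is closed.
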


For the corresponding measure theory zero-one law 
see Proposition \ref{m01} below. 
\br

\section{URS's for Wreath products}\label{Ex}\label{Sec3}

Let $\Ga$ be an arbitrary countable infinite group and let $G = \Z_2 \wr \Ga = \{1,-1\} \wr \Ga$ 
be the corresponding (restricted) Wreath product. Recall that the group $\Z_2 \wr \Ga$ is defined as the
semidirect product $G = K \rtimes \Ga$, where $K = \oplus_{\ga \in \Ga} \Z_2$ 
and $\Ga$ acts on $K$ by permutations: $(\ga\cdot k)(\ga')= k(\ga'\ga), \ \ga, \ga' \in \Ga, k \in K$. 
Thus, for $(l,\ga), (k,\ga') \in K \wr \Ga$,
$$
(l,\ga)(k,\ga') = (l (\ga \cdot k), \ga\ga').
$$

Next let $\Om = \{-1,0,1\}^\Ga$ be the product space equipped with the compact product topology.
We let $G$ act on $\Om$ as follows. The elements of $K$ act by coordinatewise left multiplication
(in the semigroup $ \{-1,0,1\}$), while the action of $\ga \in \Ga$ is again via the corresponding permutation.
Thus, for $\om \in \Om, k \in K, \ga,\ga' \in \Ga$,
$$
((k,\ga)\om)(\ga') = k(\ga)\om(\ga'\ga).
$$ 

Consider the dynamical system $(\Om,G)$ and, as above, let $G_\om =\{g \in G : g\om =\om\}$.
It is easy to see that the set $\{G_\om : \om \in \Om\}$ is a closed invariant subset of 
$\mathcal{S}(G)$ which is isomorphic to the $\Ga$ $2$-shift $(\Theta,G): =(\{0,1\}^\Ga, G)$ (where $K$ acts trivially).
In fact for a point $\om \in \Om$ the corresponding subgroup $G_\om$ is the subgroup 
$\oplus_A \{1,-1\}$, where $A=A(\om) = \{\ga \in \Ga : \om(\ga)=0\}$. Thus the groups 
$\{G_\om : \om \in \Om\}$ are in one-to-one correspondence with the functions 
$\{\ch_A : A \subset \Ga\} = \{0,1\}^\Ga$. Moreover, we see that the map 
$\phi : \om  \to G_\om$ can be viewed as the
homomorphism $\om \to \ch_{A(\om)}$, $\Om \to \{0,1\}^\Ga$.
This observation shows that the dynamical system $\mathcal{S}_\Om =
\{G_\om : \om \in \Om\} \subset \mathcal{S}(G)$, the stability system of $(\Om,G)$, is isomorphic
to the dynamical system $(\Theta,G) = (\{0,1\}^\Ga,G) = (\{0,1\}^\Ga,\Ga)$.

Now to every $\Ga$-invariant closed subset $\La \subset \Theta$ we associate the 
$G$-system $\tilde\La := \phi^{-1}(\La)\subset \Om$. 
If $\La$ is a $\Ga$-minimal system, then
$$
\tilde{\La} = \{\om \in \Om : \ch_{A(\om)} \in \La\},
$$
is a $G$ minimal system (due to the fact that 
$K= \oplus_{\ga \in \Ga} \{1,-1\}$ densely embeds as a subgroup of the compact group $\otimes_{\ga \in \Ga}\{1,-1\}$)
and its stability system $\mathcal{S}_{\tilde\La} =
\{G_\om : \om \in \tilde{\La}\}$ is a $G$-URS which is isomorphic to $\La$. 
Thus we have shown that {\em every $\Ga$-minimal subset of the $2$-shift $(\{0,1\}^\Ga,\Ga)$
appears as a stability system, hence as URS in $\mathcal{S}(G)$}.

For some background on the notion of RIM (relatively invariant measure) see \cite{Gl-RIM}. For the notions of 
proximality and (topological) weak mixing see e.g. \cite{Gl-prox}.
The proof of the next proposition is straightforward.

\begin{prop}\label{RIM}
For $\theta \in \Theta=\{0,1\}^\Ga$ let $\la_\tet$ denote the Haar measure on the compact group $\{1,-1\}^{A^c(\tet)}$, 
where $A(\tet) = \{\ga \in \Ga : \tet(\ga)=1\}$.
Then the extension $\phi : \Om \to \Tet$ is a measure preserving homomorphism in the sense that 
the {\em section} \  $\la : \Tet \to M(\Om)$, with $\supp(\la_\tet) = \phi^{-1}(\tet)$,
naturally defined by the family $\{\la_\tet : \tet \in \Tet\}$, is a RIM.
\end{prop}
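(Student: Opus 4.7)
The plan is to verify the three defining properties of a RIM (relatively invariant measure) for the proposed section $\la\colon \Tet \to M(\Om)$: (a) it is continuous in the weak-$*$ topology, (b) $\supp(\la_\tet) = \phi^{-1}(\tet)$, and (c) it is $G$-equivariant in the sense that $g_*\la_\tet = \la_{g\tet}$ for every $g \in G$ and $\tet \in \Tet$. Together these say exactly that the fibre decomposition of the (unique natural) measure $\mu = \int \la_\tet\,d\nu(\tet)$ is $G$-equivariant for any $G$-invariant $\nu$ on $\Tet$.

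First I would fix $\tet \in \Tet$ with $A := A(\tet) = \{\ga : \tet(\ga)=1\}$. The fibre $\phi^{-1}(\tet)$ consists precisely of those $\om \in \Om$ with $\om(\ga)=0$ for $\ga\in A$ and $\om(\ga) \in \{-1,1\}$ for $\ga \in A^c$, so it is naturally homeomorphic to the compact group $\{-1,1\}^{A^c}$, and $\la_\tet$ is by definition its Haar measure transported to $\Om$. The support assertion (b) is then immediate, and continuity (a) follows by observing that $\la_\tet$ is the product of one-dimensional measures $\mu_\ga^\tet$, where $\mu_\ga^\tet = \delta_0$ if $\tet(\ga)=1$ and $\mu_\ga^\tet = \tfrac{1}{2}(\delta_{-1}+\delta_1)$ if $\tet(\ga)=0$; on each finite cylinder in $\Om$, the value of $\la_\tet$ depends on $\tet$ through finitely many continuous coordinate maps.

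The main step is (c). Writing $g = (k,\ga) \in K \rtimes \Ga$, I would first note that $K$ acts on $\Om$ by coordinatewise multiplication by $\pm 1$ (never creating or destroying zero entries) and $\Ga$ acts by the induced permutation, so $\phi$ intertwines the $G$-action on $\Om$ with the shift action of $\Ga$ on $\Tet$ (with $K$ acting trivially), and a short computation yields $A(g\tet) = A\ga^{-1}$. Restricted to fibres, the action becomes a homeomorphism
\[
g\colon \{-1,1\}^{A^c} \longrightarrow \{-1,1\}^{A^c\ga^{-1}},
\]
which decomposes as the coordinate permutation $\ga' \mapsto \ga'\ga$ followed by coordinatewise multiplication by the appropriate values of $k$. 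Both operations preserve the product Haar measure on $\{-1,1\}^{\cdot}$: permutations by construction, and the $\pm 1$ multiplication because the uniform measure on $\{-1,1\}$ is negation-invariant. Hence $g_*\la_\tet$ is the Haar measure on $\{-1,1\}^{A^c\ga^{-1}}$, which is exactly $\la_{g\tet}$.

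The only mildly delicate point is getting the bookkeeping of the semidirect product right — in particular making sure that after the permutation step the $K$-factor still acts coordinatewise by values in $\{-1,1\}$ — but once the fibre is identified with a product of copies of $\{-1,1\}$ and the group action is expressed in those coordinates, the Haar-invariance of each piece makes the verification automatic, which is why the authors label the proof straightforward.
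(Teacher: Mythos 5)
Your verification is correct and is exactly the routine check the authors have in mind: the paper offers no written proof (it only declares the proposition straightforward), and your identification of the fibre $\phi^{-1}(\tet)$ with $\{-1,1\}^{A^c(\tet)}$, the local-constancy of $\tet\mapsto\la_\tet(C)$ on cylinders, and the Haar-invariance under the permutation-plus-sign-flip action of $(k,\ga)$ are precisely the intended steps. Nothing further is needed.
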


\begin{prop}\label{lift}
For $\La$ a minimal subsystem of $(\{0,1\}^\Ga,\Ga)$ let $\phi_\La : \tilde\La \to \La$ be 
the restriction of $\phi$ to $\tilde\La$. Then
\begin{enumerate}
\item
$\phi_\La$ is an open  homomorphism of $G$-systems.
\item
For every $\om \in \tilde\La$ and $k \in K$ the pair $(\om, k\om) \in R_\phi^{(2)}$ is $G$-proximal.  
\item
For every $n \ge 1$ the relative $n$-proximal relation $P_\phi^{(n)}$ is dense in $R_\phi^{(n)}$.
\item
The extension $\phi_\La$ is weakly mixing. 
\item
For $\tet \in \La$ let $\la_\tet$ denote the Haar measure on the compact group $\{1,-1\}^{A^c(\tet)}$, 
then the section $\tet \mapsto \la_\tet$ is a  RIM for the extension $\phi_\La : \tilde\La \to \La$.
Thus, in particular, $\la$ defines an injection of the set of $\Ga$-nonsingular probability measures
on $\La$ into the set of NSRS's of $G$ (the $G$-nonsingular probability measures on
$\tilde{\La}$) and moreover a $\Ga$-invariant measure on $\La$ is lifted to a $G$-invariant measure on 
$\tilde{\La}$.
\end{enumerate}
\end{prop}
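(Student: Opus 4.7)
My plan is to treat the five parts in order, systematically exploiting the semidirect decomposition $G = K \rtimes \Ga$ (with $K = \oplus_{\ga \in \Ga} \Z_2$) and the parametrization of the fibers $\phi^{-1}(\tet) \cong \{-1,1\}^{A^c(\tet)}$, on which $K$ acts by finite coordinate sign flips while $\Ga$ merely permutes the index set.

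For (1), I would compute the image under $\phi$ of a basic cylinder $U \subset \Om$ defined by constraints $\om(\ga_i) \in B_i$, $i = 1,\dots,\ell$: the image is the cylinder in $\Tet$ where $\tet(\ga_i)$ is forced to $1$ if $B_i = \{0\}$, to $0$ if $\emptyset \neq B_i \subseteq \{\pm 1\}$, and left free otherwise. Using $\tilde\La = \phi^{-1}(\La)$ one checks $\phi_\La(U \cap \tilde\La) = \phi(U) \cap \La$, giving openness. For (2), fix $k \in K$ with finite support $S_k$ and choose any sequence $\ga_m \in \Ga$ eventually leaving every finite subset of $\Ga$ (which exists because $\Ga$ is infinite). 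For any finite window $F \subset \Ga$ and $m$ large enough, $F\ga_m \cap S_k = \emptyset$, so $(\ga_m k\om)(\ga) = k(\ga\ga_m)\om(\ga\ga_m) = \om(\ga\ga_m) = (\ga_m \om)(\ga)$ for every $\ga \in F$, witnessing $G$-proximality of $(\om, k\om)$.

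For (3), given $(\om_1, \dots, \om_n) \in \phi^{-1}(\tet)^n$ and a cylinder neighborhood determined by a finite window $F$, I would define $k_i \in K$ (for $2 \le i \le n$) by $k_i(\ga) := \om_1(\ga)\om_i(\ga)$ when $\ga \in F \cap A^c(\tet)$ and $k_i(\ga) := 1$ otherwise. Then $k_i$ has support contained in $F$ and $k_i\om_1|_F = \om_i|_F$, so the tuple $(\om_1, k_2\om_1, \dots, k_n\om_1)$ lies in the chosen neighborhood. Applying the argument of (2) simultaneously to the common finite set $S = \bigcup_{i \ge 2} S_{k_i}$, one obtains $\ga_m \in \Ga$ with $\ga\ga_m \notin S$ for all $\ga$ in any prescribed finite window; passing to a subsequence so that $\ga_m \om_1 \to \om^*$ then forces $\ga_m k_i \om_1 \to \om^*$ for every $i$, so the approximating tuple is $n$-proximal.

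Part (4) then follows from the standard characterization (see \cite{Gl-prox}): a minimal extension is weakly mixing iff $P_\phi^{(n)}$ is dense in $R_\phi^{(n)}$ for every $n \ge 1$, exactly what (3) supplies. For (5), the RIM property $g_* \la_\tet = \la_{g\tet}$ is immediate: for $g = \ga_0 k$ the action on the fiber factors as a coordinate sign flip (by $k$) composed with a bijection $\{-1,1\}^{A^c(\tet)} \to \{-1,1\}^{A^c(\ga_0 \tet)}$ (by $\ga_0$), both of which preserve product Haar measure. The lift $\mu \mapsto \tilde\mu := \int \la_\tet \, d\mu(\tet)$ is injective (recovered by $\phi_*$), and the equivariance of $\la$ transports $\Ga$-nonsingularity or $\Ga$-invariance of $\mu$ to the corresponding $G$-property of $\tilde\mu$. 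The only genuinely external input is the weak mixing characterization invoked in (4); everything else is a direct unwinding of the $K \rtimes \Ga$ structure.
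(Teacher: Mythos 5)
Your proposal is correct and follows essentially the same route as the paper's own (very terse) proof: openness via cylinder images, proximality by translating the finite discrepancy set out of any finite window, density of $K^n(\om,\dots,\om)$ in the fiber product for (3), an appeal to the Glasner--van der Woude characterization of weakly mixing open extensions for (4), and the Haar-measure equivariance for the RIM in (5). You have simply written out the details the paper dismisses as ``easy to check.''
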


\begin{proof}
Claim (1) is easy to check. Claim (2) follows from the fact that $\om$ and $k\om$ differ in only finitely
many coordinates. Claim (3) is a consequence of the fact that the set 
$K\times K \times \cdots \times K (\om, \om,\dots,\om)$ ($n$-times) is dense in 
$\phi^{-1}(\phi(\om)) \times \phi^{-1}(\phi(\om)) \times \cdots \times \phi^{-1}(\phi(\om))$.
Claim (4) is now a consequence of an old result of Glasner and van der Woude 
(see \cite[Theorem 6.3]{G-Bohr})
which ensures that an open homomorphism between minimal systems which satisfies condition 
(3) is indeed weakly mixing.
Finally the details of claim (5) are easily verified.
\end{proof}

\begin{exa}
Let $\Ga$ be the free group on two
generators, $\Ga=\mathbb{F}_2=\langle a,b \rangle$.
Let $Z$ be the space of right infinite reduced words on the letters
$\{a,a^{-1},b,b^{-1}\}$. Then $\Ga$ acts on $Z$ by concatenation on the left
and cancelation.

Let $C(a) = \{z \in Z :\  \text{the first letter of $z$ is $a$}  \}$ and let 
$\psi : Z \to \{0,1\}^\Ga$ be defined by $\psi(z)(\ga) = \ch_{C(a)}(\ga z),\ z \in Z, \ga \in \Ga$.
It is not hard to check that $\psi$ is an isomorphism of the minimal system $(Z,\Ga)$ into
the system $(\{0,1\}^\Ga,\Ga)$. Let $\La = \psi(Z)$, then $\La$ is a minimal
subsystem of $(\{0,1\}^\Ga,\Ga)$, and thus, via the construction described above, we 
obtain a realization of the system $(Z,\Ga)$ as a URS of the group $G = \Z_2 \wr \Ga$,
namely as the stability system of the dynamical system $(\tilde{\La}, G)$.

Let $m$ be the probability measure $m=\frac 14(\del_a+\del_b+\del_{a^{-1}}+\del_{b^{-1}})$ on $\Ga$
and let  $\eta$ be the probability measure on $Z$ given by
$$
\eta(C(\ep_1,\dots,\ep_n))=\frac1{4\cdot 3^{n-1}},
$$
where for $\ep_j\in\{a,a^{-1},b,b^{-1}\}$,\
$C(\ep_1,\dots,\ep_n)=\{z\in Z:z_j=\ep_j,\ j=1,\dots,n\}$. 
The measure $\eta$ is $m$-stationary (i.e. $m *\eta= \eta$) and the $m$-system 
${\bf{Z}} = (Z,\eta,\Ga)$ is the Poisson boundary $\Pi(\mathbb{F}_2,m)$ (see e.g. \cite{FG}). 
The push forward  $\psi_*(\eta)$ is an $m$-stationary measure on $\La$.
The fact that the system $(Z,\Ga)$ is strongly proximal (see \cite{Gl-prox}) shows that there is no $\Ga$ 
invariant measure on the URS $(\La,\Ga)$. Thus we have the following:

\begin{prop}
The URS $\mathcal{S}_{\tilde\La}\cong (\La,\Ga)$ carries the probability measure $\eta$
which is an ergodic NSRS, but it admits no IRS.
\end{prop}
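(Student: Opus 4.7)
The plan is to verify the three claims --- NSRS, ergodicity, and absence of IRS --- by reducing each to a statement about the $\Ga$-action on $(Z,\eta)$ and then invoking the Poisson boundary theory together with the strong proximality of the $\mathbb{F}_2$-action on its boundary.

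First I would observe that the normal subgroup $K = \oplus_{\Ga}\{1,-1\}$ acts trivially on the stability system $\mathcal{S}_{\tilde\La}$: every stability subgroup $G_\om$ is contained in the abelian group $K$, and conjugation by an element of $K$ fixes each element of $K$. Consequently, the $G$-action on $\mathcal{S}_{\tilde\La}$ factors through the quotient $G/K \cong \Ga$, and via the composite isomorphism $\mathcal{S}_{\tilde\La} \cong (\La,\Ga) \cong (Z,\Ga)$ a $G$-nonsingular (resp.\ $G$-invariant, $G$-ergodic) probability measure on the URS corresponds bijectively to a $\Ga$-nonsingular (resp.\ $\Ga$-invariant, $\Ga$-ergodic) probability measure on $Z$. (The $K$-piece is automatic since $K$ acts trivially.)  Thus all three assertions reduce to statements about $\eta$ on $(Z,\Ga)$.

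Second, since $\eta$ is, by construction, the $m$-stationary measure realizing the Poisson boundary $\Pi(\F_2,m)$ of the nearest-neighbor random walk, the standard theory of Poisson boundaries --- as cited in \cite{FG} --- gives at once that $\eta$ is $\Ga$-quasi-invariant (the support of $m$ generates $\Ga$) and that the $\Ga$-action on $(Z,\eta)$ is ergodic. Transporting through $\psi$ and the isomorphism of the first paragraph, $\psi_*\eta$ is then a $G$-ergodic NSRS supported on the URS $\mathcal{S}_{\tilde\La}$.

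Third, to rule out an IRS I would argue by contradiction. A $G$-invariant probability supported on $\mathcal{S}_{\tilde\La}$ pulls back, via the reduction above, to a $\Ga$-invariant probability measure $\nu$ on $Z$. By the strong proximality of $(Z,\Ga)$ (the classical fact that $\F_2$ acts strongly proximally on its geodesic boundary, as referenced in the paper), the weak-$\ast$ $\Ga$-orbit closure of any probability measure on $Z$ contains a Dirac mass. Since $\nu$ is $\Ga$-invariant, that orbit closure is $\{\nu\}$, which forces $\nu=\del_z$ for some $z \in Z$; but then $z$ is a $\Ga$-fixed point, and the minimality of $(Z,\Ga)$ collapses $Z$ to a singleton, contradicting the (uncountable) nature of the space of infinite reduced words.

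The only step requiring some care is the bookkeeping in the first paragraph, namely checking that all three notions --- nonsingularity, invariance, and ergodicity --- transport cleanly along the isomorphism $\mathcal{S}_{\tilde\La} \cong Z$. I do not expect a genuine technical obstacle: the essential content is already packaged in the strong proximality of the boundary action and in the standard ergodicity and quasi-invariance properties of the Poisson boundary measure.
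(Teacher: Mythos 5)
Your proposal is correct and follows essentially the same route as the paper: the surrounding text of the example already packages the argument as (i) the stability subgroups lie in the abelian normal subgroup $K$, so the conjugation action on $\mathcal{S}_{\tilde\La}$ factors through $\Ga$ and is isomorphic to $(\La,\Ga)\cong(Z,\Ga)$, (ii) $\eta$ is the $m$-stationary Poisson-boundary measure, hence quasi-invariant and ergodic, giving the ergodic NSRS, and (iii) strong proximality of $(Z,\Ga)$ excludes any $\Ga$-invariant measure. You merely spell out the transport of the three properties along the isomorphism and the Dirac-mass argument, which the paper leaves implicit.
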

\end{exa}

\br

\section{Applications to ergodic systems}\label{Sec4}

Our first application is a direct consequence of Proposition \ref{TTS}.

\begin{prop}\label{m01}
Let $\Xb = (X,\mathcal{B},\mu,G)$ be an ergodic nonsingular dynamical system.
Let $\phi: X \to \mathcal{S}(\Ga)$ be the map $x \mapsto G_x = \{g \in G : g x =x\}$.
Let $\nu = \phi_*(\mu)$, the push forward probability measure on $\mathcal{S}(G)$.
Finally, let $Z = \supp(\nu)$. Then $Z$ is a TTS and for 
every dynamical property $\mathcal{L} \subset \mathcal{S}(\Ga)$ 
the set $Z \cap \mathcal{L}$ is either meager or comeager.
If moreover the dynamical property $\mathcal{L}$ is $\nu$-measurable
then $\nu(Z \cap \mathcal{L})$ is either $0$ or $1$.
\end{prop}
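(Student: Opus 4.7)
The plan is to reduce all three assertions to the upper semi-continuity and equivariance of $\phi$ together with the two zero-one laws already available: Proposition~\ref{TTS} on the topological side, and the ergodic zero-one law on the measure-theoretic side.

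First I would record the structural properties of $\phi$. It is Borel measurable, since at the beginning of Section~\ref{Sec1} it was shown that $x \mapsto G_x$ is upper semi-continuous, and it is $G$-equivariant because $\phi(gx)=G_{gx}=gG_xg^{-1}=g\cdot\phi(x)$. Equivariance together with the assumption that $\mu$ is $G$-nonsingular makes $\nu=\phi_*\mu$ a $G$-nonsingular probability measure on $\mathcal{S}(G)$. Moreover, ergodicity of $\mu$ passes to $\nu$: any $\nu$-measurable $G$-invariant set $E \subset \mathcal{S}(G)$ pulls back to the $\mu$-measurable $G$-invariant set $\phi^{-1}(E) \subset X$, whose $\mu$-measure is $\nu(E)$. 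Consequently $Z=\supp(\nu)$ is a closed $G$-invariant subset of $\mathcal{S}(G)$ on which $\nu$ is a $G$-nonsingular ergodic measure of full support.

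To show $Z$ is topologically transitive I would run the standard Baire category argument. Fix a countable base $\{U_n\}$ for the topology of $Z$; each $U_n$ satisfies $\nu(U_n)>0$ by definition of the support, so $V_n:=\bigcup_{g\in G}gU_n$ is a $G$-invariant open set of positive $\nu$-measure, and ergodicity forces $\nu(V_n)=1$. Each $V_n$ is therefore dense in $Z$, and by Baire's theorem $\bigcap_n V_n$ is a dense $G_\delta$ subset of $Z$, every point of which has $G$-orbit meeting each basic open set $U_n$ and hence dense orbit in $Z$. Thus $Z$ is a TTS, establishing the first part of the statement, and the assertion that $Z\cap\mathcal{L}$ is meager or comeager for every dynamical property $\mathcal{L}$ follows verbatim from Proposition~\ref{TTS}.

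Finally, for the measure-theoretic zero-one law, let $\mathcal{L}$ be a $\nu$-measurable dynamical property. Writing $\mathcal{L}=A\triangle N$ with $A$ Borel and $N$ a $\nu$-null set, Borel measurability of $\phi$ makes $\phi^{-1}(A)$ Borel and $\phi^{-1}(N)$ $\mu$-null, so $\phi^{-1}(\mathcal{L})$ is $\mu$-measurable. Equivariance of $\phi$ together with conjugation-invariance of $\mathcal{L}$ makes $\phi^{-1}(\mathcal{L})$ $G$-invariant, so ergodicity of $\mu$ gives $\mu(\phi^{-1}(\mathcal{L}))\in\{0,1\}$; since $\nu$ is concentrated on $Z$, this common value is exactly $\nu(Z\cap\mathcal{L})$. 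The only step requiring any care is the propagation of ergodicity under $\phi_*$ in the nonsingular setting, but as invariant sets and $\nu$-null sets both pull back correctly under a Borel equivariant map, no real obstacle arises.
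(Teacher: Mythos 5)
Your proof is correct and follows essentially the same route as the paper's: push the ergodicity forward along $\phi$ to obtain an ergodic nonsingular measure of full support on $Z$, deduce topological transitivity by the standard Baire category argument, and then invoke the topological zero-one law (Proposition~\ref{TTS}) and the ergodic zero-one law via pullback. The only cosmetic point is that $\Xb$ is an abstract measurable system rather than a compact topological one, so the measurability of $\phi$ should be justified from the measurability of the sets $\{x : gx=x\}$, $g \in G$, rather than from upper semi-continuity; this changes nothing of substance.
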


\begin{proof}
The ergodicity of $\Xb$ implies the ergodicity of the factor nonsingular system $(Z,\nu,G)$.
Now the latter is a topological system, where by construction $\supp(\nu)=Z$.
In this situation ergodicity implies topological transitivity and the zero-one law applies.
The last assertion follows directly from the ergodicity of $\Xb$.
\end{proof}

Thus e.g. let us single out the following.

\begin{prop}
For an ergodic nonsingular $\Xb$ either $\mu$-a.e. $G_x$ is amenable
or $\mu$-a.e. $G_x$ is non amenable.
\end{prop}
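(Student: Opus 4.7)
The plan is to apply Proposition \ref{m01} directly, taking $\mathcal{L}$ to be the set of amenable subgroups of $G$. The only thing to check is that this $\mathcal{L}$ is a $\nu$-measurable dynamical property, and then the zero-one conclusion translates back from $\nu$ to $\mu$ via the push-forward.

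First I would set
$$
\mathcal{L} = \mathcal{S}(G)_{\text{am}} = \{H < G : H \text{ is amenable}\}.
$$
Amenability is listed as an admissible property in the preceding proposition (it is preserved under isomorphisms and subgroups, and under increasing unions since the amenable groups form a variety closed under directed unions). Proposition \ref{closed} therefore shows that $\mathcal{L}$ is a closed, conjugation-invariant subset of $\mathcal{S}(G)$. As a closed set it is Borel, hence Baire measurable and $\nu$-measurable; being conjugation-invariant, it is a dynamical property in the sense used in the excerpt. (The remark after Proposition \ref{closed} notes that closedness of the amenable subgroups in the discrete case follows also from Schochetman's work, but here we do not need this since the admissibility argument already gives it.)

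Next I would invoke Proposition \ref{m01}: since $\mathcal{L}$ is a $\nu$-measurable dynamical property, $\nu(Z \cap \mathcal{L}) \in \{0,1\}$. Because $Z = \supp(\nu)$ we have $\nu(\mathcal{L}) = \nu(Z \cap \mathcal{L}) \in \{0,1\}$. Finally, unwinding the definition of $\nu = \phi_*(\mu)$,
$$
\mu(\{x \in X : G_x \text{ is amenable}\}) = \mu(\phi^{-1}(\mathcal{L})) = \nu(\mathcal{L}) \in \{0,1\},
$$
which is precisely the stated dichotomy.

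I do not foresee a genuine obstacle: the proof is really just the observation that amenability descends from an admissible group-theoretic property to a Borel, conjugation-invariant subset of $\mathcal{S}(G)$, to which the zero-one law of Proposition \ref{m01} applies. The only mildly delicate point is recording that amenability is indeed admissible (in particular closed under increasing unions), which is classical.
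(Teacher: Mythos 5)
Your proof is correct and is exactly the route the paper intends: the proposition is stated there as an immediate special case of Proposition \ref{m01}, with amenability supplying a closed (hence Borel and $\nu$-measurable) conjugation-invariant set via admissibility and Proposition \ref{closed}. The only nitpick is the word ``variety'' for the class of amenable groups, which is not a variety in the usual sense; what you actually use, and what the paper records, is just closure under subgroups and increasing unions.
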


For more results on the stability systems of ergodic actions of Lie groups we refer to \cite{Go-S}.

\br

For our next application consider a  
dynamical system $(X,G)$ (either compact or Borel). For each $g \in G$
let $F_g = \{x \in X : gx=x\}$. We then have, for every $x \in X$,
$$
G_x = \{g \in G : x \in F_g\}.
$$

\begin{cor}\label{measure}
Let $(\Om,\mu,G)$ be a probability measure preserving system and
suppose it admits a minimal model $(X,\nu,G)$. Thus, we assume the existence
of a measure isomorphism $\rho : (\Om,\mu,G) \to (X,\nu,G)$. 
Then for $\mu$-a.e. $\om \in \Om$ the orbit closure 
$\cls \{g G_\om g^{-1} : g \in G\}$ must contain $\mathcal{S}_X$ as a unique minimal subset.
\end{cor}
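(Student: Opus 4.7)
My strategy is to reduce the statement to Proposition \ref{stability}(3) by transferring it from the abstract measure-theoretic system $(\Om,\mu,G)$ to the topological minimal model $(X,\nu,G)$ via the isomorphism $\rho$. The plan splits into two independent pieces: a measure-theoretic step that matches stabilizers through $\rho$, and a purely topological step that identifies $\mathcal{S}_X$ as the unique minimal subset of each individual orbit closure.

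First I would use the countability of $G$ to upgrade $\rho$. A measure isomorphism only intertwines the actions modulo $\mu$-null sets, but since $G$ is countable we may, for each $g\in G$, discard a null set on which $\rho(g\om)\ne g\rho(\om)$ and intersect over $g$; this produces a $G$-invariant conull set $\Om_0\subset\Om$ on which $\rho$ is a genuine $G$-equivariant bijection onto a conull invariant subset $X_1\subset X$. Because $\rho$ is bijective on $\Om_0$, the equivalence $g\om=\om\iff g\rho(\om)=\rho(\om)$ holds, so $G_\om = G_{\rho(\om)}$ for every $\om\in\Om_0$. It will therefore suffice to establish that for \emph{every} $x\in X$, the orbit closure $\cls\{gG_xg^{-1}:g\in G\}\subset\mathcal{S}(G)$ contains $\mathcal{S}_X$ as its unique minimal subset; then applying this to $x=\rho(\om)$ for $\om\in\Om_0$ gives the corollary.

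For the topological step, fix $x\in X$ and let $X_0\subset X$ be the dense $G_\del$ set of continuity points of $\phi:y\mapsto G_y$. By minimality of $(X,G)$, for any $x_0\in X_0$ we can choose $g_n\in G$ with $g_n x\to x_0$; passing to a subsequence so that $g_nG_xg_n^{-1}=G_{g_nx}$ converges in $\mathcal{S}(G)$, upper semi-continuity of $\phi$ together with the continuity of $\phi$ at $x_0$ forces the limit to be $G_{x_0}$. Hence $\{G_{x_0}:x_0\in X_0\}\subset\cls\{gG_xg^{-1}:g\in G\}$, and taking closures gives $\mathcal{S}_X\subset\cls\{gG_xg^{-1}:g\in G\}$. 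Since $\mathcal{S}_X$ is itself minimal and sits inside the larger $G$-invariant closed set $\cls\{G_y:y\in X\}$ in which, by Proposition \ref{stability}(3), $\mathcal{S}_X$ is the \emph{unique} minimal subset, every minimal subset of the smaller set $\cls\{gG_xg^{-1}:g\in G\}$ must also be $\mathcal{S}_X$. This gives uniqueness and completes the argument.

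The only genuine subtlety I anticipate is the first step — arranging the equality $G_\om=G_{\rho(\om)}$ on a bona fide conull set rather than merely ``up to null sets for each $g$'' — which is exactly where countability of $G$ is used. Once that is in hand, the rest is a direct appeal to the structural properties of the stability system established in Proposition \ref{stability}.
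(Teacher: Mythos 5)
Your proposal is correct and follows essentially the same route as the paper: use the countability of $G$ to arrange $G_\om = G_{\rho(\om)}$ on a conull set (the paper does this via the sets $A_g$ and $F_g$ and the relation $\nu(\rho(A_g)\,\tri\, F_g)=0$), and then invoke the structure of the stability system from Proposition \ref{stability}(3). The only difference is cosmetic: the paper simply cites Proposition \ref{stability}(3), whereas you re-derive the per-point containment $\mathcal{S}_X \subset \cls\{gG_xg^{-1}:g\in G\}$ from continuity of $\phi$ at points of $X_0$ --- a worthwhile elaboration, since the literal statement of (3) concerns $\cls\{G_y : y\in X\}$ while its proof is what yields the individual orbit-closure claim you need.
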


\begin{proof}
For each $g \in G$ let
$$
A_g = \{\om \in \Om : g\om = \om\}.
$$
Then for every $\om \in \Om$ we have
$$
G_\om = \{g \in G : \om \in A_g\}.
$$
Now for every $g \in G$ we have $\nu(\rho(A_g) \tri F_g) =0$ and it follows that
for $\mu$-a.e. $\om$, \ $G_\om = G_{\rho(\om)}$.
Our claim now follows from  Proposition \ref{stability}.(3).
\end{proof}

\begin{exa}\label{lamp-ex}
As in Section \ref{Ex}, 
let $\Om= \{-1,0,1\}^\Z$ and $\sig : \Om \to \Om$ the shift map. Let $\mu$ denote the
product measure $\{1/4,1/2,1/4\}^\Z$ on $\Om$. Set $\Lb_0 = \oplus_\Z \{-1,1\}$ 
(the countable direct sum) and let $\Lb_0$ act on $\Om$ by coordinate-wise multiplication. 
Let $\Lb = \{1,-1\} \wr \Z$, the {\em lamplighter group}. We view $\Lb$
as the group of homeomorphisms of $\Om$ generated by $\Lb_0$ and $\sig$.
As in Section \ref{Ex} we observe that the set $\{\Lb_\om : \om \in \Om\}$ is a closed invariant subset of 
$\mathcal{S}(\Lb)$ which is isomorphic to the $2$-shift $(\{0,1\}^\Z, \sig)$ (where $\Lb_0$ acts trivially).
For a point $\om \in \Om$ the corresponding subgroup $\Lb_\om$ is the subgroup 
$\oplus_A \{-1,1\}$, where $A=A(\om) = \{n \in \Z : \om(n)=0\}$. The groups 
$\{\Lb_\om : \om \in \Om\}$ are in one-to-one correspondence with the functions 
$\{\ch_A : A \subset \Z\} = \{0,1\}^\Z$, and the map 
$\phi : \om  \to \Lb_\om$ is a homomorphism $\om \mapsto \ch_{A_\om}$, $\Om \to \{0,1\}^\Z$.
Thus the dynamical system $\mathcal{S}_\Om =
\{\Lb_\om : \om \in \Om\} \subset \mathcal{S}(\Lb)$, the stability system of $(\Om,\Lb)$, is isomorphic
to the full shift dynamical system $(\{0,1\}^\Z,\sig)$.
 \end{exa}
 
It was shown in \cite{W} that for an arbitrary countable infinite group $G$,
any {\bf free} ergodic probability measure preserving $G$-system admits a minimal model.
In contrast we show next that for the lamplighter group there is an ergodic
measure preserving action which does not admit a minimal model.

\begin{thm}\label{lamp} 
For the lamplighter group $\Lb$ there is an ergodic dynamical system for which no
minimal model exits.
\end{thm}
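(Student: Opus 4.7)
The strategy is to apply Corollary \ref{measure} to the system $(\Om,\mu,\Lb)$ from Example \ref{lamp-ex}, where $\mu$ is the product measure $(1/4,1/2,1/4)^{\Z}$ on $\Om = \{-1,0,1\}^{\Z}$. First I would verify that $\mu$ is $\Lb$-invariant and that the action is ergodic. Invariance is immediate since the weights $(1/4,1/2,1/4)$ are symmetric under sign flip, so $\Lb_0$ preserves $\mu$, and $\sig$ preserves the product. For ergodicity, observe that an $\Lb$-invariant $L^{2}$ function, being invariant under all coordinate-wise sign flips in $\Lb_0$, must be measurable with respect to the $\sigma$-algebra generated by $\om \mapsto \ch_{A^c(\om)}$, and hence factors through the map $\phi$ into $(\{0,1\}^{\Z},\phi_*\mu,\sig)$. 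The latter is the Bernoulli $(1/2,1/2)$ shift, which is ergodic, so $(\Om,\mu,\Lb)$ is ergodic.

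Next, suppose for contradiction that $(\Om,\mu,\Lb)$ admits a minimal model $(X,\nu,\Lb)$. By Corollary \ref{measure}, for $\mu$-a.e.\ $\om$ the orbit closure $\cls\{g G_{\om} g^{-1} : g \in \Lb\}$ in $\mathcal{S}(\Lb)$ contains the stability system $\mathcal{S}_X$ as its \emph{unique} minimal subset. The plan is to show that for $\mu$-a.e.\ $\om$ this orbit closure is the entire full $2$-shift $(\{0,1\}^{\Z},\sig)$, and then conclude by noting that the full shift contains several minimal subsets, contradicting uniqueness.

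As recorded in Example \ref{lamp-ex}, the map $\om \mapsto G_\om$ identifies $\mathcal{S}_{\Om}$ with $(\{0,1\}^{\Z},\sig)$ via $G_{\om} \leftrightarrow \ch_{A(\om)}$, where $A(\om) = \{n : \om(n) = 0\}$. Because $\Lb_0$ is abelian and normal in $\Lb$, it acts trivially by conjugation on its own subgroups, so the $\Lb$-conjugation action on $\mathcal{S}_{\Om}$ factors through $\Lb/\Lb_0 \cong \Z$ and corresponds to the shift. Hence the orbit closure of $G_\om$ in $\mathcal{S}(\Lb)$ coincides with the shift orbit closure of $\ch_{A(\om)}$ in $\{0,1\}^{\Z}$. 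Since $\phi_*\mu$ is the Bernoulli $(1/2,1/2)$ measure, for $\mu$-a.e.\ $\om$ the point $\ch_{A(\om)}$ is generic: by a standard Borel--Cantelli argument every finite binary word occurs infinitely often in it, and so its shift orbit closure is all of $\{0,1\}^{\Z}$. However, $(\{0,1\}^{\Z},\sig)$ contains at least two distinct minimal subsystems---for instance the two fixed points $\{\underline{0}\}$ and $\{\underline{1}\}$---contradicting the uniqueness in Corollary \ref{measure}.

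The main obstacle is conceptual rather than computational: the essential point is the identification from Example \ref{lamp-ex} of the stability system of $(\Om,\Lb)$ with the full $2$-shift, together with the observation that the $\Lb$-conjugation action on $\mathcal{S}_\Om$ becomes the ordinary shift once one quotients out the abelian normal subgroup $\Lb_0$. Once this intertwining is in place, the contradiction comes for free: a Bernoulli product measure assigns full weight to points whose orbit is dense in the full shift, and the full shift obviously supports multiple minimal subsystems, so the necessary condition from Corollary \ref{measure} must fail.
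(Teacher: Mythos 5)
Your proposal is correct and follows essentially the same route as the paper: apply Corollary \ref{measure} to the system $(\Om,\mu,\Lb)$ of Example \ref{lamp-ex}, identify its stability system with the full $2$-shift, and derive a contradiction from the fact that the full shift contains more than one minimal subset. You merely make explicit several points the paper leaves implicit (ergodicity of $\mu$, that $\phi_*\mu$ is the Bernoulli $(1/2,1/2)$ measure, and that a $\mu$-generic point has dense shift orbit), all of which check out.
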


\begin{proof}
Consider the dynamical system $(\Om,\mu,\Lb)$ presented in Example \ref{lamp-ex}.
Suppose it admits a minimal model $(X,\nu,\Lb)$. Thus, we assume the existence
of a measure isomorphism $\rho : (\Om,\mu,\Lb) \to (X,\nu,\Lb)$.
However, the topological system $(\{0,1\}^\Z,\Lb) = (\{0,1\}^\Z,\sig)$ contains uncountably
many distinct minimal sets and applying Corollary \ref{measure} we 
arrive at a contradiction.
\end{proof}

\begin{prop}
Let $(X,G)$ be a minimal system. Let $\mu$ be a $G$-invariant probability measure on $X$
such that the action of $G$ on the probability space $(X,\mu)$ is $\mu$ essentially free.
Then the action $(X,G)$ is essentially free.
\end{prop}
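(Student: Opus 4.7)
The plan is to show that every continuity point $x_0$ of the stabiliser map $\phi\colon X\to\mathcal{S}(G)$, $x\mapsto G_x$, actually satisfies $G_{x_0}=\{e\}$. This collapses the stability system $\mathcal{S}_X$ to $\{\{e\}\}$, which by Definition \ref{stab-sys-def} is exactly the statement that $(X,G)$ is essentially free.

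First I would observe that $\mu$ has full support: $\supp(\mu)$ is a non-empty closed $G$-invariant subset of $X$, and by minimality of $(X,G)$ it must coincide with $X$. Consequently every $\mu$-conull Borel set is dense in $X$. By the $\mu$-essential freeness hypothesis, the set
$$
E=\{x\in X:G_x=\{e\}\}
$$
is $\mu$-conull, and hence dense in $X$.

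Next, let $X_0\subset X$ denote the dense $G_\delta$ of continuity points of $\phi$ furnished by Proposition \ref{stability}. Fix any $x_0\in X_0$ and, using density of $E$, pick a sequence $x_n\in E$ with $x_n\to x_0$. Continuity of $\phi$ at $x_0$ gives $G_{x_n}\to G_{x_0}$ in the Fell topology on $\mathcal{S}(G)$. But $G_{x_n}=\{e\}$ for every $n$, and the constant sequence $\{e\}$ trivially Fell-converges to $\{e\}$; because $G$ is locally compact the Fell topology on $\mathcal{S}(G)$ is Hausdorff, so limits are unique and therefore $G_{x_0}=\{e\}$. Consequently
$$
\mathcal{S}_X=\cls\{G_x:x\in X_0\}=\{\{e\}\},
$$
as required.

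No substantive obstacle is anticipated: the proof is a direct combination of the full-support consequence of minimality, the dense $G_\delta$ of continuity points supplied by Proposition \ref{stability}, and the Hausdorffness of the Fell topology on $\mathcal{S}(G)$. The only point demanding even a moment's thought is the uniqueness of the Fell limit of the constant sequence $\{e\}$, and this is handled by Hausdorffness.
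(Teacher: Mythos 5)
Your proof is correct. It starts from the same observation as the paper's --- minimality forces $\mu$ to have full support --- but then diverges. The paper works with the individual fixed-point sets $F_g=\{x\in X: gx=x\}$ for $g\neq e$: each is closed, and if it had nonempty interior then full support would give $\mu(F_g)>0$, contradicting $\mu$-essential freeness; hence each $F_g$ is nowhere dense, the union $F=\bigcup_{g\neq e}F_g$ over the countable group is meager, every point of the comeager complement has trivial stabilizer, and for a minimal system one point with trivial stabilizer already yields essential freeness (the remark following Definition \ref{stab-sys-def}, resting on Proposition \ref{stability}(3)). You instead keep the conull free set $E$ intact, use full support only to conclude that $E$ is dense, and then transfer triviality of stabilizers to every continuity point of $\phi$ by uniqueness of Fell limits, which verifies $\mathcal{S}_X=\{\{e\}\}$ directly from its definition. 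The two routes are close in spirit --- each upgrades ``free $\mu$-a.e.'' to ``free on a topologically large set'' --- but yours trades the Baire-category union for the continuity of the stabilizer map and does not need the ``one free point suffices'' remark, while the paper's version avoids any appeal to the Fell topology beyond the definition of $\mathcal{S}_X$. The only point worth making explicit in your write-up is that $E=\bigcap_{g\neq e}(X\setminus F_g)$ is Borel (indeed a $G_\delta$) because $G$ is countable, which is also the measurability the hypothesis of $\mu$-essential freeness tacitly requires.
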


\begin{proof}
For each $g \in G$ we set $F_g = \{x \in X : gx=x\}$, a closed subset of $X$. If for some $g \not=e$,
$\inte F_g \not=\emptyset$ then, by minimality, $\mu(F_g) >0$ which contradicts our assumption
that the measure action is essentially free. Thus each set $F_g, g \not = e$ is nowhere dense, 
hence the set
$F = \bigcup \{F_g : e\not = g \in G\}$ is meager. 
Since $G_x = \{e\}$ for every $x \in X \setminus F$,
we conclude that the system $(X,G)$ is indeed essentially free.
\end{proof}

\begin{rmk}
The same proof works assuming only that $\mu$ is nonsingular.
\end{rmk}

\br

\section{Uncountably many URS's for $\mathbb{F}_2$}\label{Sec5}

\begin{thm}\label{many}
For the free group $G = \mathbb{F}_2$, the space $\mathcal{S}(G)$ contains uncountablly many non-isomorphic infinite URS's.
\end{thm}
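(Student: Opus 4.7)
The plan is to transport URS's into $\mathcal{S}(\F_2)$ from a quotient group for which many URS's are already available, namely the lamplighter group $\Lb = \Z_2\wr\Z$ of Example \ref{lamp-ex}. Because $\Lb$ is generated by the shift $\sig$ together with the single flip $\tau$ at position $0$ (indeed $\sig^n\tau\sig^{-n}$ generates the $\Z_2$-factor at position $n$), there is a surjective homomorphism $\eta : \F_2 = \langle a,b\rangle \to \Lb$ defined by $\eta(a)=\sig$ and $\eta(b)=\tau$. By Proposition \ref{gen}(1), $\eta$ induces an embedding of dynamical systems $\eta_*:\mathcal{S}(\Lb)\to\mathcal{S}(\F_2)$; moreover the identity $\eta_*(H)^a=\widetilde{H^{\eta(a)}}$ shows that the $\F_2$-action on the image $\eta_*(\mathcal{S}(\Lb))$ factors through $\eta$, so that $\F_2$- and $\Lb$-orbits on this subset coincide. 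In particular $\eta_*$ carries $\Lb$-URS's to $\F_2$-URS's.

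Next I invoke the wreath-product construction of Section \ref{Ex} with $\Ga = \Z$ and $G = \Lb$. For every $\Z$-minimal subshift $\La\subset\{0,1\}^\Z$ this construction produces a $\Lb$-minimal subsystem $\tilde\La\subset\{-1,0,1\}^\Z$ whose stability system $\mathcal{S}_{\tilde\La}\subset\mathcal{S}(\Lb)$ is a URS which is $\Lb$-equivariantly isomorphic to $\La$, with $\Lb$ acting on $\La$ through the semidirect product projection $\Lb\to\Z$. Pulling back through $\eta_*$ yields, for every such $\La$, an infinite $\F_2$-URS $Z_\La := \eta_*(\mathcal{S}_{\tilde\La})\subset\mathcal{S}(\F_2)$, infinite because $\La$ itself is already infinite (e.g.\ a Cantor set).

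To finish, I must check that non-conjugate minimal subshifts $\La\not\cong\La'$ give non-isomorphic $\F_2$-URS's $Z_\La\not\cong Z_{\La'}$. If $\psi:Z_\La\to Z_{\La'}$ were an $\F_2$-equivariant homeomorphism, then, since the $\F_2$-action on both sides factors through $\eta$ and $\eta$ is surjective, $\psi$ would automatically be $\Lb$-equivariant: for $b\in\Lb$ and any $a\in\eta^{-1}(b)$ we have $\psi(b\cdot x)=\psi(a\cdot x)=a\cdot\psi(x)=b\cdot\psi(x)$. The $\Lb$-action on the copies of $\La$ and $\La'$ is merely the $\Z$-shift composed with the projection $\Lb\to\Z$, so $\psi$ would descend to a $\Z$-equivariant homeomorphism $\La\to\La'$, contradicting the non-conjugacy of the two subshifts.

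The theorem thus reduces to the classical fact that $\{0,1\}^\Z$ carries uncountably many pairwise non-conjugate minimal $\Z$-subshifts; this is witnessed, for instance, by the continuum of Sturmian subshifts $\{X_\al:\al\in(0,1)\setminus\Q\}$, for which $X_\al\cong X_\beta$ only when $\al\equiv\pm\beta\pmod 1$. I expect no serious obstacle in carrying out this plan; the only item that deserves a bit of care is the equivariance bookkeeping between $\F_2$, $\Lb$ and $\Z$, which however is immediate from Proposition \ref{gen}(1).
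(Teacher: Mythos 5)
Your proposal is correct and follows essentially the same route as the paper: both pass through the surjection $\eta:\mathbb{F}_2\to\Lb$ onto the two-generated lamplighter group, identify the resulting $\mathbb{F}_2$-stability systems with minimal subshifts of $(\{0,1\}^\Z,\sig)$ via Proposition~\ref{gen}(1) and the wreath-product construction of Section~\ref{Ex}, and conclude from the existence of uncountably many pairwise non-conjugate minimal $\Z$-subshifts. Your explicit check that an $\mathbb{F}_2$-isomorphism of the resulting URS's descends to a $\Z$-conjugacy of the underlying subshifts is a detail the paper leaves implicit, but it is the same argument.
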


\begin{proof}
Our construction is based on Example \ref{lamp} above and the following basic observation.
The lamplighter group $\Lb$ is generated by two elements.
Explicitly we can take these to be $e_1$ and $\sig$, where $e_1 \in \Lb_0$
is defined by $e_1(n) = (-1)^{\del_{1n}}$.
Let $\eta : \mathbb{F}_2 \to \Lb$ be the surjective group homomorphism which is determined by
$\eta(a) = e_1$ and $\eta(b) = \sig$. Next define an action of $\mathbb{F}_2$ on
$\Om$ by letting $g\om = \eta(g)\om,\ g \in \mathbb{F}_2, \om \in \Om$.
Clearly then $G_\om = \eta^{-1}(\Lb_\om),\ (\om \in \Om)$ and again we see that the 
dynamical system $\mathcal{S}_\Om =
\{G_\om : \om \in \Om\} \subset \mathcal{S}(G)$, the $G$-stability system of $(\Om,G)$, is isomorphic to the dynamical system $(\{0,1\}^\Z,\sig)$ (see Proposition \ref{gen}.(1)). 
As the latter system contains an uncountable 
family of pairwise non isomorphic minimal subsystems, our proof is complete.
\end{proof}

\begin{rmk}
This is of course a much stronger assertion than the claim in \cite[Lemma 3.9]{SZ} 
that the action of $\mathbb{F}_2$ on $\mathcal{S}(\mathbb{F}_2)$ is not tame.
\end{rmk}

\begin{rmk}
By construction these URS's are noneffective. A more sophisticated
construction due to Louis Bowen \cite[Theorem 5.1]{B} and the main result of
\cite{GJS} (actually \cite[Theorem 3.1]{GU} will suffice here) yield an uncountable family
of pairwise nonisomorphic effective URS's for $\mathbb{F}_2$.  
\end{rmk}

Since $\mathbb{F}_2$ sits as a finite index subgroup in the group in $SL(2,\Z)$ it follows that
this latter group also admits an uncountable family of pairwise non isomorphic URS's.
By the well known work of Stuck and Zimmer, for the group $G = SL(3,\Z)$, 
any IRS on $\mathcal{S}(G)$ is finite (see \cite[Corollaries 4.4 and 4.5]{SZ}).
We do not know whether the same holds for URS's on $\mathcal{S}(G)$.

\begin{prob}\label{pro2}
Are there infinite URS's in $\mathcal{S}(SL(3,\Z))$ ?
\end{prob}

\br

\section{Realizations of URS's as stability systems}\label{Sec6}

A basic result proved in \cite{AGV} is that to every ergodic IRS $\nu$ (a $G$-invariant
probability measure on $\mathcal{S}(G)$) there corresponds an 
ergodic probability measure preserving system $\Xb = (X,\mathcal{B},\mu,G)$ whose
stability system is $(\mathcal{S}(G),\nu)$. See the recent works of Creutz and Peterson 
\cite[Theorem 3.3]{Cr-P} and Creutz \cite[Theorem 3.3]{Cr} for continuous and 
NSRS versions of this theorem.
We expected to be able to obtain
an analogous statement for URS's. However, the question whether this desired analog holds remains open 
(see Problem \ref{min-stab} below) and we only have the following:

\begin{prop}\label{real}
Let $G$ be a countable infinite group.  
For every URS $Z \subset \mathcal{S}(G)$ there is a topologically transitive system $(X,G)$ with 
$Z$ as its stability system.
\end{prop}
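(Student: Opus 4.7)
The plan is to fix a base point $H_0 \in Z$ and realize $X$ as an orbit closure in a product space combining $Z$ with the Bernoulli $G$-shift, using indicator functions to record the coset dynamics of $H_0$. The crucial observation is that, under the left-shift action $(g\cdot f)(h) = f(g^{-1}h)$ of $G$ on $\{0,1\}^G$, one has $g\cdot \ch_{H_0} = \ch_{gH_0}$, so that the stabilizer of $\ch_{H_0}$ equals exactly $H_0$.

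I would take $X := \cls\{g \cdot x_0 : g \in G\} \subset Z \times \{0,1\}^G$, where $x_0 = (H_0, \ch_{H_0})$ and $G$ acts diagonally (by conjugation on $Z$ and by shift on $\{0,1\}^G$). Then $(X,G)$ is topologically transitive by construction. Using $gH_0g^{-1} \subseteq N_G(gH_0g^{-1})$, a direct computation gives $G_{g \cdot x_0} = N_G(gH_0g^{-1}) \cap gH_0g^{-1} = gH_0g^{-1}$ for every $g \in G$, so on the orbit of $x_0$ the stabilizer agrees with the first-coordinate projection $\pi(x)$. Since $Z$ is a URS the $G$-orbit of $H_0$ is dense in $Z$, and hence $\pi : X \to Z$ is a continuous, $G$-equivariant surjection.

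To conclude that $\mathcal{S}_X = Z$, let $X^* \subset X$ denote the dense $G_\delta$ set of continuity points of the upper-semi-continuous stability map $\phi : x \mapsto G_x$ (as in Proposition \ref{stability}). Given $x \in X^*$, density of the orbit of $x_0$ supplies a net $g_\alpha x_0 \to x$. Continuity of $\phi$ at $x$ gives $g_\alpha H_0 g_\alpha^{-1} = \phi(g_\alpha x_0) \to \phi(x)$, while continuity of $\pi$ gives $g_\alpha H_0 g_\alpha^{-1} = \pi(g_\alpha x_0) \to \pi(x)$; by Hausdorffness of $\mathcal{S}(G)$ the two limits coincide, so $\phi(x) = \pi(x) \in Z$. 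Consequently $\{G_x : x \in X^*\} = \pi(X^*)$, which is dense in $Z$ as the continuous image of a dense set under the surjection $\pi$. Passing to the closure yields $\mathcal{S}_X = Z$.

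The main subtlety lies in the equality $G_x = \pi(x)$ at continuity points. Upper-semi-continuity alone only yields the inclusion $\pi(x) \subseteq G_x$, and at generic (non-continuity) limit points the stabilizer can genuinely be strictly larger than $\pi(x)$; the continuity-at-$x$ hypothesis together with the Hausdorff property of $\mathcal{S}(G)$ is exactly what pins the two limits together and forces equality. This same gap between topological transitivity and minimality is precisely why the present construction yields only a transitive, rather than minimal, model, and explains why the stronger minimal realization question remains open.
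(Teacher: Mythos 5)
Your proof is correct, and it departs from the paper's in one substantive way. Both arguments realize $X$ as the orbit closure in $Z\times\{0,1\}^G$ of a point $(H_0,\om_0)$ whose stabilizer is \emph{exactly} $H_0$, and both finish identically: at every continuity point $x$ of the upper-semi-continuous stabilizer map, comparing the limit of $G_{g_n x_0}=\pi(g_n x_0)$ along the dense orbit with $\pi(x)$ forces $G_x=\pi(x)$, whence $\mathcal{S}_X=\cls \pi(X^*)=Z$; this is precisely the paper's final step. The difference is how the generating point is produced. The paper treats two cases separately: when some $H\in Z$ has finite index it uses the finite coset space $G/H$, and when every $H\in Z$ has infinite index it passes to the space $\Om_H$ of $H$-fixed points of the shift, lets $\tilde{H}=N_G(H)/H$ act on the fiber, and invokes the Gao--Jackson--Seward coloring theorem to obtain a free minimal $\tilde{H}$-subshift, which is what guarantees a point of $\{H\}\times\Om_H$ with stabilizer equal to $H$ rather than merely squeezed between $H$ and $N_G(H)$. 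Your choice $\om_0=\ch_{H_0}$ achieves this explicitly and elementarily: $g\cdot\ch_{H_0}=\ch_{gH_0}$, so the shift-stabilizer of $\ch_{H_0}$ is the stabilizer of the coset $H_0$ in $G/H_0$, namely $H_0$ itself, and this handles the finite- and infinite-index cases uniformly with no external input. What the paper's heavier construction buys is that the generating point lies in a minimal subsystem of the fiber on which $\tilde{H}$ acts freely --- structure that is superfluous for topological transitivity but natural if one aims at the still-open minimal realization of Problem \ref{min-stab}; what yours buys is brevity and the elimination of the coloring theorem. Two cosmetic remarks: since $X$ is compact metric you may work with sequences rather than nets, and note that the inclusion $\pi(x)\subset G_x$ for all $x\in X$ (not just continuity points) also follows directly from the fact that $\{(H,\om)\in Z\times\{0,1\}^G: h\om=\om\ \forall h\in H\}$ is closed and contains the orbit of $x_0$.
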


\begin{proof}
We begin with the case where some $H \in Z$ has finite index in $G$.
It then follows that $Z$ is finite and we let $Y = G/H = \{gH : g\in G\}$, the finite homogeneous
$G$-space of right $H$-cosets.
We now take $X$ to be the $G$-orbit of the point $x_0=(H,H)$ in the product system
$(Z \times Y, G)$. Note that $G$ acts on $Z$ by conjugation, whereas it acts on $Y$
by multiplication on the left. The stability subgroup $G_{x_0}$ is $H$ and it follows that
indeed $Z$ is the stability system of the (minimal) finite system $(X,G)$.

We now assume that $[G:H] = \infty$ for every $H \in Z$.

Set $\Om = \{0,1\}^G$ and let $G$ act on $\Om$ by $g\om(h) = \om(g^{-1}h)$.
For $H < G$ let
$$
\Om_H = \{\om \in \Om : h\om = \om,\ \forall h \in H\},
$$
and
$$
\Theta = \bigcup \{\{H\} \times \Om_H : H \in Z\}.
$$
It is easy to check that $\Theta$ is a closed $G$-invariant subset of
$Z \times \Om$.
Also note that the map $\pi : (\Theta,G) \to (Z,G)$ is clearly a homomorphism of $G$-systems.  

1. \ 
Our first observation is that for each $x =(H,\om) \in \Theta$ we have $\pi(x)=H$ and
$$
H \subset G_x = \{g \in G : gx=x\} \subset G_{\pi(x)}= G_H = \{g\in G : gHg^{-1} =H\} = N_G(H),
$$
the normalizer of $H$ in $G$.
Also note that 
$H \nor N_G(H)$, so that $\tilde{H} =N_G(H)/H$ is a group.
Finally observe that for every $x \in \Theta$ the group $\tilde{H}$, with $H =\pi(x)$, acts on the fiber
$\pi^{-1}(H) = \{H\} \times \Om_H \cong \Om_H$.

2. \ 
Given $H \in Z$ consider the map $\rho_H : \Om_H \to \{0,1\}^{\tilde{H}}$ which sends an element $\om
\in \Om_H \subset \Om = \{0,1\}^G$ to its restriction to the subset $N_G(H) \subset G$.
(We identify  the image $\rho_H(\Om_H)$ with $\{0,1\}^{\tilde{H}}$.)
Clearly $\rho_H$ is a surjective homomorphism of $\tilde{H}$-systems.

3.\ 
We now fix some (arbitrary) point $H \in Z$.
We claim that there exists a $\tilde{H}$-minimal subsystem
of the symbolic system $(\{0,1\}^{\tilde{H}},\tilde{H})$, say $\tilde{Y}$, 
such that the action of $\tilde{H}$ on $\tilde{Y}$ is free.
There are two cases we need to consider. The first case is when the group $\tilde{H}$
is finite. We then take $\tilde{Y}$ to be $\tilde{H}\chi_e$ where $\chi_e \in \{0,1\}^{\tilde{H}}$
is the function $\chi_e(h) = \del_{e,h}$. 
In the second case, where $\tilde{H}$ is infinite,
we apply a recent result of Gao, Jackson and Seward \cite{GJS},
which ensures the existence of a free $\tilde{H}$-minimal subsystem $\tilde{Y}$
of $(\{0,1\}^{\tilde{H}},\tilde{H})$. 
In each of these cases choose $Y$
to be any minimal subsystem of the system $(\Om_H,\tilde{H})$ such that
$\rho_H(Y) = \tilde{Y}$. From the fact that $\tilde{H}$ acts freely on $Y$ it follows immediately that for every
$x=(H,\om) \in Y$ we have 
$$
N_G(H)_x = G_x = H.
$$
 
4.\ 
Now pick any point $y_0 = (H,\om_0) \in Y$ and let
$X={\cls}{Gy_0} \subset \Theta$. 
If $x_1=(H_1,\om_1)$ is a continuity point of the map $x \mapsto G_x$
(from $X$ to $\mathcal{S}(G)$),
and $g_i y_0 \to x_1$ then $G_{g_iy_0} = g_iHg_i^{-1} \to G_{x_1} = H_1
= \pi(x_1)\in Z$. Thus on the dense $G_\del$ set $X_c\subset X$ of continuity points
of the map $x \mapsto G_x$, this map coincides with $\pi$.
In other words $Z$ is the stability system of $X$.
\end{proof}

\begin{prob}\label{min-stab}
Can Proposition \ref{real} be strengthened to provide a minimal $(X,G)$ with $Z$ as its
stability system ?
\end{prob}

\end{document}